\numberwithin{equation}{section}
\newtheorem{theorem}{Theorem}[section]
\newtheorem{lemma}{Lemma}[section]
\newtheorem{proposition}{Proposition}[section]
\newtheorem{corollary}{Corollary}[section]
\newtheorem{remark}{Remark}[section]
\newtheorem{assumption}{Assumption}[section]
\newtheorem{conjecture}{Conjecture}[section]
\newtheorem{condition}{Condition}[section]
\def\ba{\boldsymbol{a}}
\def\bb{\boldsymbol{b}}
\def\be{\boldsymbol{e}}
\def\bi{\boldsymbol{i}}
\def\bl{\boldsymbol{l}}
\def\bm{\boldsymbol{m}}
\def\bs{\boldsymbol{s}}
\def\bu{\boldsymbol{u}}
\def\bv{\boldsymbol{v}}
\def\bx{\boldsymbol{x}}
\def\by{\boldsymbol{y}}
\def\bX{\boldsymbol{X}}
\def\bY{\boldsymbol{Y}}
\def\bpi{\boldsymbol{\pi}}
\def\bnu{\boldsymbol{\nu}}
\def\btheta{\boldsymbol{\theta}}
\def\bzero{\mathbf{0}}
\def\scrI{\mathscr{I}}
\def\scrP{\mathscr{P}}
\def\spr{\mbox{\rm spr}}
\def\cp{\mbox{\rm cp}}
\def\diag{\mbox{\rm diag}}
\title{Convergence parameters of nonnegative block tri-diagonal matrices and their application to multi-dimensional  QBD processes}
\author{Toshihisa Ozawa  \\ 
Faculty of Business Administration, Komazawa University \\
1-23-1 Komazawa, Setagaya-ku, Tokyo 154-8525, Japan \\
E-mail: toshi@komazawa-u.ac.jp
}
\date{\today}
\begin{document}

\maketitle

\begin{abstract}
First, we consider a nonnegative homogeneous block tri-diagonal matrix and obtain its convergence parameter, where some results in the field of matrix analytic method are extended to the case where block matrices have countably infinite dimension. 
Second, we apply our results to a multi-dimensional QBD process and obtain lower bounds for the directional asymptotic decay rates of the stationary distribution. 

\smallskip
{\it Keywards}: Nonnegative matrix, convergence parameter, quasi-birth-and-death process, Markov modulated reflecting random walk, stationary tail distribution, asymptotic decay rate 

\smallskip
{\it Mathematics Subject Classification}: 60J10, 60J27, 60K25
\end{abstract}

%%%%%%%%%%%%%%%%%%%%%%%%%%%%%%%%%%%%%%%
%
%  Section 1
%
%%%%%%%%%%%%%%%%%%%%%%%%%%%%%%%%%%%%%%%
\section{Introduction} \label{sec:intro}

We develop tools for analyzing asymptotics in discrete-time multi-dimensional quasi-birth-and-death processes (QBD processes for short) and demonstrate their effectiveness. 
A discrete-time multi-dimensional QBD process is an extension of ordinary discrete-time (one-dimensional) QBD process and its typical examples are queueing network models and multiqueue models arising from production systems, information network systems and other service systems. 
Let $\{\bY_n\}=\{(\bX_n,J_n)\}$ be a $d$-dimensional QBD process on the state space $\mathbb{S}_+=\mathbb{Z}_+^d\times S_0$, where $S_0=\{1,2,...,s_0\}$ is the phase space, $\bX_n=(X_{1,n},X_{2,n},...,X_{d,n})$ is the level state and $J_n$ is the phase state. We assume the cardinality of $S_0$, $s_0$, is finite. 
The $d$-dimensional QBD process $\{\bY_n\}$ is also a $d$-dimensional skip-free Markov modulated reflecting random walk (MMRRW for short) with background state $J_n$. 

As pointed out in \cite{Miyazawa11}, there are several approaches to attack an asymptotic problem for the stationary distribution of such a multidimensional process. Here we take a Markov additive approach based on Matrix analytic methods in queueing theory. The research field of matrix analytic method was originated by M.F.\ Neuts and it has been providing many algorithms to compute the stationary distributions and related performance measures for various queueing models (see, for example, \cite{Neuts94,Latouche99}).  
Matrix analytic methods are also used for analyzing asymptotics of the stationary distributions of queueing models including those having two queues (see, for example, \cite{He09,Miyazawa04,Miyazawa09,Miyazawa15,Ozawa13,Ozawa18,Takahashi01}). 
In such literature, the behavior of a queueing model is represented as a QBD process with countably many phase states, whose transition probability matrix $P$ is given in tri-diagonal block form as 
\begin{equation}
P=\begin{pmatrix}
B_0 & B_1 & & & \cr
B_{-1} & A_0 & A_1 & & \cr
& A_{-1} & A_0 & A_1 & \cr
& & \ddots & \ddots & \ddots
\end{pmatrix}, 
\label{eq:P_intro}
\end{equation}
where the dimensions of the block matrices are countably infinite. Like QBD processes having finite phase states, such a QBD process also have the stationary distribution $\bpi=(\bpi_k)$ given in block form as
\begin{equation}
\bpi_k = \bpi_1 R^{k-1},\ k\ge 1, 
\end{equation}
where $R$ is the rate matrix determined by the triplet $\{A_{-1},A_0,A_1\}$ (see, for example, \cite{Tweedie82}). Hence, we can investigate asymptotics of the stationary distribution through the rate matrix $R$ and the following formula is used as a key formula (see, Lemma 2.2 of \cite{He09} and Lemma 2.3 of \cite{Miyazawa09}): 
\begin{equation}
\cp(R) = \sup\{\theta\in\mathbb{R};\, \bx A_*(\theta) \le \bx\ \mbox{for some}\ \bx > \bzero^\top \}, \label{eq:cpR_intro}
\end{equation}
where, for a matrix $A$, $\cp(A)$ is the convergence parameter of $A$, $\bzero$ is a column vector of 0's and $A_*(\theta)$ is given as
\begin{equation}
A_*(\theta) = e^{-\theta} A_{-1} + A_0 + e^{\theta} A_1.
\end{equation}

A $d$-dimensional QBD process $\{\bY_n\}=\{((X_{1,n},X_{2,n},...,X_{d,n}),J_n)\}$ introduced above can also be represented as $d$ kinds of QBD process with countably many phase states, for example, one is $\{\bY_n^{(1)}\}=\{(X_{1,n},(X_{2,n},...,X_{d,n},J_n))\}$, where $X_{1,n}$ is the level state and $(X_{2,n},...,X_{d,n},J_n))$ is the phase state. 
Here we consider $\{\bY^{(1)}\}$ and focus on formula (\ref{eq:cpR_intro}). Inequality $\bx A_*(\theta)\le \bx$ for $\bx>\bzero^\top$ implies that $\cp(A_*(\theta))\ge 1$ and we have 
\begin{equation}
\cp(R) = \sup\{\theta\in\mathbb{R}; \cp(A_*(\theta))^{-1} \le 1\}.
\end{equation}
Hence, in order to obtain $\cp(R)$, it suffices to consider $\cp(A_*(\theta))$ for $\theta\in\mathbb{R}$. 
The phase state of $\{\bY^{(1)}\}$ is given by $(X_{2,n},...,X_{d,n},J_n)$ and hence, for example, by regarding $X_{2,n}$ as the level state and $(X_{3,n},...,X_{d,n},J_n)$ as the phase state, the block matrices $A_{-1}$, $A_0$ and $A_1$ can be represented in block tri-diagonal form. This leads us to the following representation of $A_*(\theta)$:  
\begin{equation}
A_*(\theta)=\begin{pmatrix}
B_{*,0}(\theta) & B_{*,1}(\theta) & & & \cr
B_{*,-1}(\theta) & A_{*,0}(\theta) & A_{*,1}(\theta) & & \cr
& A_{*,-1}(\theta) & A_{*,0}(\theta) & A_{*,1}(\theta) & \cr
& & \ddots & \ddots & \ddots
\end{pmatrix}. 
\label{eq:As_intro}
\end{equation}
This $A_*(\theta)$ is a nonnegative block tri-diagonal matrix, but it may no longer be stochastic or substochastic. Therefore, in order to apply matrix analytic methods to $A_*(\theta)$, we must extend them to nonnegative block tri-diagonal matrices with countably many phase states. 
\textit{For the case in which $B_{*,-1}(\theta)=A_{*,-1}(\theta)$, $B_0(\theta)=A_0(\theta)$ and $B_1(\theta)=A_1(\theta)$, we do it in Section \ref{sec:RandGmatrix}}, where the rate matrix and G-matrix of a general nonnegative block tri-diagonal matrix are introduced and their properties are clarified. 
If $A_{*,-1}(\theta)$, $A_{*,0}(\theta)$ and $A_{*,1}(\theta)$ are also represented in block tri-diagonal form, our approach can recursively be applied. 
We note that References \cite{Kijima93,Li03} discussed the case where a matrix corresponding to $A_*(\theta)$ was a substochastic matrix with finite phase states. Our results are also an extension of their results. 

In Section \ref{sec:modelandresults}, we apply our results to a multi-dimensional QBD process and obtain a lower bound for the asymptotic decay rate of the stationary distribution in each coordinate direction. We also discuss multi-dimensional Markov additive processes arising from the multi-dimensional QBD process. 
In Section \ref{sec:conclusion}, we present a conjecture for the directional asymptotic decay rates of the stationary distribution in multi-dimensional QBD processes as a concluding remark.

\medskip
\textit{Notations.} 
$\mathbb{R}$ is the set of all real numbers and $\mathbb{R}_+$ that of all nonnegative real numbers. $\mathbb{Z}$ is the set of all integers and $\mathbb{Z}_+$ that of all nonnegative integers. $\mathbb{N}$ is the set of all positive integers and, for $n\ge 1$, $\mathbb{N}_n$ is that of positive integers less than or equal to $n$. %, i.e., $\mathbb{N}_n=\{1,2,...,n\}$. 
For a matrix $A$, we denote by $[A]_{i,j}$ the $(i,j)$-element of $A$. The transpose of a matrix $A$ is denoted by $A^\top$. The convergence parameter of a nonnegative matrix $A$ with a finite or countable dimension is denoted by $\cp(A)$, i.e., $\cp(A) = \sup\{z\in\mathbb{R}_+; \sum_{n=0}^\infty z^n A^n<\infty \}$. We denote by $\spr(A)$ the spectral radius of $A$, which is the maximum modulus of eigenvalue of $A$. 
$O$ is a matrix of $0$'s, $\be$ is a column vector of $1$'s and $\bzero$ is a column vector of $0$'s; their dimensions, which are finite or countably infinite, are determined in context. $I$ is the identity matrix.

%%%%%%%%%%%%%%%%%%%%%%%%%%%%%%%%%%%%%%%
%
%  Section 2
%
%%%%%%%%%%%%%%%%%%%%%%%%%%%%%%%%%%%%%%%
\section{Nonnegative block tri-diagonal matrix and its properties} \label{sec:RandGmatrix}

Consider a nonnegative block tri-diagonal matrix $Q$ defined as 
\[
Q 
= \begin{pmatrix}
A_0 & A_1 & & & \cr
A_{-1} & A_0 & A_1 & & \cr
& A_{-1} & A_0 & A_1 & \cr
& & \ddots & \ddots & \ddots 
\end{pmatrix}, 
\]
where $A_{-1}$, $A_0$ and $A_1$ are nonnegative square matrices with a countable dimension, i.e., for $k\in\{-1,0,1\}$, $A_k=(a_{k,i,j},i,j\in\mathbb{Z}_+)$ and every $a_{k,i,j}$ is nonnegative. %$Q$ is a truncation of $P$. 
We define a matrix $A_*$ as 
\[
A_*=A_{-1}+A_0+A_1. 
\]
%We immediately obtain the following facts. 
%
%\begin{proposition} \label{pr:Q_irreducible}
%If $Q$ is irreducible, then $P$ is irreducible, and if $P$ is irreducible, both $A_{-1}$ and $A_1$ are nonzero and $A_*$ is irreducible.
%\end{proposition}
%
Hereafter, we adopt the policy to give a minimal assumption in each place. First, we give the following conditions. 
\begin{condition} \label{cond:As_finite} 
\begin{itemize}
\item[(a1)] Both $A_{-1}$ and $A_1$ are nonzero matrices.
\end{itemize}
\end{condition}

\begin{condition}
\begin{itemize}
\item[(a2)] All iterates of $A_*$ are finite, i.e., for any $n\in\mathbb{Z}_+$, $A_*^n<\infty$. 
\end{itemize}
\end{condition}

Condition (a1) makes $Q$ a \textit{true block tri-diagonal} matrix. 
Under condition (a2), all multiple products of $A_{-1}$, $A_0$ and $A_1$ becomes finite, i.e., for any $n\in\mathbb{N}$ and for any $\bi_{(n)}=(i_1,i_2, ..., i_n)\in\{-1,0,1\}^n$, $A_{i_1} A_{i_2} \cdots A_{i_n}<\infty$. 
Hence, for the triplet $\{ A_{-1}, A_0,  A_1\}$, we can define a matrix $R$ corresponding to the rate matrix of a QBD process and a matrix $G$ corresponding to the G-matrix. 
%
%A sufficient condition on which Assumption \ref{as:As_finite} holds is that the sum of each row of $A_*$ is bounded, i.e., there exists some positive real number $c$ such that $A_*\be\le c\,\be$; in that case, matrix $c^{-1}A_*$ becomes substochastic. 
%
%Since $A_*$ is not assumed to be stochastic or substochastic, we cannot directly use probabilistic arguments for the purpose. However, it should be noted that the following definitions are essentially the same as those used in the case where $A_*$ is stochastic or substochastic. 
If $\cp(A_*)<\infty$, discussions for $Q$ may be reduced to probabilistic arguments. For example, if there exist an $s>0$ and positive vector $\bv$ such that $s A_* \bv\le \bv$, then $\Delta_{\bv}^{-1} A_* \Delta_{\bv}$ becomes stochastic or substochastic, where $\Delta_{\bv}=\diag\, \bv$, and discussion for the triplet $\{ A_{-1}, A_0,  A_1\}$ can be replaced with that for $\{ \Delta_{\bv}^{-1}A_{-1}\Delta_{\bv}, \Delta_{\bv}^{-1}A_0\Delta_{\bv},  \Delta_{\bv}^{-1}A_1\Delta_{\bv} \}$. However, in order to make discussion simple, we directly treat $\{ A_{-1}, A_0,  A_1 \}$ and do not use probabilistic arguments. 

Define the following sets of index sequences: for $n\ge 1$ and for $m\ge 1$, 
\begin{align*}
&\scrI_n = \biggl\{\bi_{(n)}\in\{-1,0,1\}^n;\ \sum_{l=1}^k i_l\ge 0\ \mbox{for $k\in\mathbb{N}_{n-1}$}\ \mbox{and} \sum_{l=1}^n i_l=0 \biggr\}, \\
&\scrI_{D,m,n} = \biggl\{\bi_{(n)}\in\{-1,0,1\}^n;\ \sum_{l=1}^k i_l\ge -m+1\ \mbox{for $k\in\mathbb{N}_{n-1}$}\ \mbox{and} \sum_{l=1}^n i_l=-m \biggr\}, \\
&\scrI_{U,m,n} = \biggl\{\bi_{(n)}\in\{-1,0,1\}^n;\ \sum_{l=1}^k i_l\ge 1\ \mbox{for $k\in\mathbb{N}_{n-1}$}\ \mbox{and} \sum_{l=1}^n i_l=m \biggr\}, 
\end{align*}
where $\bi_{(n)}=(i_1,i_2,...,i_n)$. Consider a QBD process $\{(X_n,J_n)\}$ on the state space $\mathbb{Z}_+^2$, where $X_n$ is the level and $J_n$ the phase. The set $\scrI_n $ corresponds to the set of all paths of the QBD process on which $X_0=l>0$, $X_k\ge l$ for $k\in\mathbb{N}_{n-1}$ and $X_n=l$, i.e., the level process visits state $l$ at time $n$ without entering states less than $l$ before time $n$. 
The set $\scrI_{D,m,n}$ corresponds to the set of all paths on which $X_0=l>m$, $X_k\ge l-m+1$ for $k\in\mathbb{N}_{n-1}$ and $X_n=l-m$, and $\scrI_{U,m,n}$ to that of all paths on which $X_0=l>0$, $X_k\ge l+1$ for $k\in\mathbb{N}_{n-1}$ and $X_n=l+m$. 
For $n\ge 1$, define $Q_{0,0}^{(n)}$, $D^{(n)}$ and $U^{(n)}$ as 
\begin{align*}
&Q_{0,0}^{(n)} = \sum_{\bi_{(n)}\in\scrI_n} A_{i_1} A_{i_2} \cdots A_{i_n},\quad
D^{(n)} = \sum_{\bi_{(n)}\in\scrI_{D,1,n}} A_{i_1} A_{i_2} \cdots A_{i_n}, \\
&U^{(n)} = \sum_{\bi_{(n)}\in\scrI_{U,1,n}} A_{i_1} A_{i_2} \cdots A_{i_n}. 
\end{align*}
Under (a2), $Q_{0,0}^{(n)}$, $D^{(n)}$ and $U^{(n)}$ are finite for every $n\ge 1$. Define $N$, $R$ and $G$ as 
\begin{align*}
&N = \sum_{n=0}^\infty Q_{0,0}^{(n)},\quad 
G = \sum_{n=1}^\infty D^{(n)}, \quad 
R = \sum_{n=1}^\infty U^{(n)},
\end{align*}
where $Q_{0,0}^{(0)}=I$. We call $N$, $G$ and $R$ the N-matrix, G-matrix and R-matrix generated from the triplet $\{ A_{-1}, A_0,  A_1\}$, respectively. 
The following properties hold. 
\begin{lemma} \label{le:RandGmatrix_equations}
Assume (a1) and (a2). Then, $N$, $G$ and $R$ satisfy the following equations, including the case where both the sides of the equations diverge. 
\begin{align}
&R =  A_1 N, \\
&G = N  A_{-1}, \\
&R = R^2  A_{-1}+R  A_0+ A_1, \label{eq:Rmatrix_equation0} \\
&G =  A_{-1}+ A_0 G+ A_1 G^2, \label{eq:Gmatrix_equation0} \\
&N = I+A_0 N+A_1 G N = I+N A_0+N A_1 G. \label{eq:NandH_relation}
\end{align}
\end{lemma}

To make this paper self-contained, we give a proof of the lemma in Appendix \ref{sec:proof_RandGmatrix_equations}. 
From (\ref{eq:NandH_relation}), $N\ge I$ and $N\ge A_0 N+A_1 G N \ge A_0 +A_1 G$. Hence, if $N$ is finite, then $A_0 +A_1 G$ is also finite and we obtain 
\begin{equation}
(I-A_0-A_1 G)N=N(I-A_0-A_1 G)=I
\label{eq:NandH_relation2}
\end{equation}
We will use equation (\ref{eq:NandH_relation}) in this form. Here, we should note that much attention must be paid to matrix manipulation since the dimension of matrices are countably infinite, e.g., see Appendix A of \cite{Takahashi01}. 

For $\theta\in\mathbb{R}$, define a matrix function $A_*(\theta)$ as 
\[
A_*(\theta) = e^{-\theta} A_{-1} + A_0 + e^\theta A_1, 
\]
where $A_*=A_*(0)$. This $A_*(\theta)$ corresponds to a Feynman-Kac operator if the triplet $\{A_{-1},A_0,A_1\}$ is a Markov additive kernel (see, e.g., \cite{Ney87}). 
For $R$ and $G$, we have the following identity corresponding to the RG decomposition for a Markov additive process, which is also called a Winer-Hopf factorization, see identity (5.5) of \cite{Miyazawa11} and references therein. 
\begin{lemma} \label{le:As_WHfac}
Assume (a1) and (a2). If $R$, $G$ and $N$ are finite, we have, for $\theta\in\mathbb{R}$, 
\begin{align}
I- A_*(\theta) &= (I-e^\theta R) (I-H) (I-e^{-\theta} G), 
\label{eq:As_WHfact_RG}
\end{align}
where $H = A_0+ A_1 G = A_0+ A_1 N  A_{-1}$. 
\end{lemma}

\begin{proof}
Using identities (\ref{eq:NandH_relation2}), we obtain
\begin{align}
I- A_*(\theta) 
&= I-e^{-\theta} (I-H)N A_{-1} - A_0-e^\theta A_1 \cr
&= (I-A_0-e^\theta A_1 - A_1 N A_{-1})(I-e^{-\theta} G) \cr
&= (I-A_0-e^\theta A_1 N(I-H) - A_1 N A_{-1})(I-e^{-\theta} G) \cr
&= \bigl( (I-e^\theta R) (I-H) \bigr) (I-e^{-\theta} G),
\label{eq:As_WHfact_RG2}
\end{align}
where we use the fact that, by Lemma \ref{le:RandGmatrix_equations}, every term appeared in the expression such as $H N A_{-1} = A_0 G+A_1 G^2$ and $A_1 N A_{-1}=A_1 G$ are finite. Analogously, we have 
\begin{align}
I- A_*(\theta) 
&= I-e^{-\theta} A_{-1} - A_0-e^\theta A_1 N(I-H) \cr
&= (I-e^\theta R)(I-A_0-e^{-\theta} A_{-1}-A_1 N A_{-1}) \cr
&= (I-e^\theta R)(I-A_0-e^{-\theta} (I-H)N A_{-1}-A_1 N A_{-1})) \cr
&= (I-e^\theta R) \bigl( (I-H) (I-e^{-\theta} G) \bigr).
\label{eq:As_WHfact_RG3}
\end{align}
As a result, we obtain (\ref{eq:As_WHfact_RG}).
\end{proof}

Consider the following matrix quadratic equations of $X$:
\begin{align}
& X = X^2 A_{-1}+X A_0+A_1, \label{eq:Rmatrix_equation1} \\
& X = A_{-1}+A_0 X+A_1 X^2. \label{eq:Gmatrix_equation1}
\end{align}
By Lemma \ref{le:RandGmatrix_equations}, $R$ and $G$ are solutions to equations (\ref{eq:Rmatrix_equation1}) and (\ref{eq:Gmatrix_equation1}), respectively. 
Consider the following sequences of matrices:
\begin{align}
X_0^{(1)}=O,\quad X_n^{(1)} = \bigl(X_{n-1}^{(1)}\bigr)^2 A_{-1}+X_{n-1}^{(1)} A_0+A_1,\ n\ge 1, 
\label{eq:Xn1_iteration} \\
X_0^{(2)}=O,\quad X_n^{(2)} = A_{-1}+A_0 X_{n-1}^{(2)}+A_1 \bigl(X_{n-1}^{(2)}\bigr)^2,\ n\ge 1. 
\label{eq:Xn2_iteration}
\end{align} 
Like the case of usual QBD process, we can demonstrate that both the sequences $\{X_n^{(1)}\}_{n\ge 0}$ and $\{X_n^{(2)}\}_{n\ge 0}$ are nondecreasing and that if a nonnegative solution $X^*$ to equation (\ref{eq:Rmatrix_equation1}) (resp.\ equation (\ref{eq:Gmatrix_equation1})) exists, then for any $n\ge 0$, $X^*\ge X_n^{(1)}$ (resp.\ $X^*\ge X_n^{(2)}$). 
Furthermore, letting $R_n$ and $G_n$ be defined as 
\[
R_n = \sum_{k=1}^n U^{(k)},\quad 
G_n = \sum_{k=1}^n D^{(k)},
\]
we can also demonstrate that, for any $n\ge 1$, $R_n\le X_n^{(1)}$ and $G_n\le X_n^{(2)}$ hold. Hence, we immediately obtain the following facts. 
%
%\begin{lemma} \label{le:RandGmatrix_solutions}
%$R$ exists if and only if the minimum nonnegative solution to equation (\ref{eq:Rmatrix_equation1}) exists, and the former is identical to the latter. Similarly, $G$ exists if and only if the minimum nonnegative solution to equation (\ref{eq:Gmatrix_equation1}) exists, and the former is identical to the latter. 
%\end{lemma}
%
%\begin{remark} \label{rm:minimum_solution}
%The minimal nonnegative solutions to equations (\ref{eq:Rmatrix_equation1}) and (\ref{eq:Gmatrix_equation1}) are given by $X_\infty^{(1)}=\lim_{n\to\infty} X_n^{(1)}$ and $X_\infty^{(2)}=\lim_{n\to\infty} X_n^{(2)}$, respectively, if they exist. 
%\end{remark}
%
\begin{lemma} \label{le:RandGmatrix_solutions}
Assume (a1) and (a2). Then, $R$ and $G$ are the minimum nonnegative solutions to equations (\ref{eq:Rmatrix_equation1}) and (\ref{eq:Gmatrix_equation1}), respectively. 
Furthermore, we have $R=\lim_{n\to\infty} X_n^{(1)}$ and $G=\lim_{n\to\infty} X_n^{(2)}$. 
\end{lemma}

%%%%%%%%%%%%%%%%%%%%%%%%%%%%%%%%%
%
If $A_*$ is irreducible, $A_*(\theta)$ is also irreducible for any $\theta\in\mathbb{R}$. We, therefore, give the following condition. 
\begin{condition}
\begin{itemize}
\item[(a3)] $A_*$ is irreducible.
\end{itemize}
\end{condition}
Let $\chi(\theta)$ be the reciprocal of the convergence parameter of $A_*(\theta)$, i.e., $\chi(\theta)=\cp(A_*(\theta))^{-1}$. 
We say that a positive function $f(x)$ is log-convex in $x$ if $\log f(x)$ is convex in $x$. A log-convex function is also a convex function. Since every element of $A_*(\theta)$ is log-convex in $\theta$, we see, by Lemma \ref{le:cp_convex} in Appendix \ref{sec:cp_convex}, that $\chi(\theta)$ satisfies the following property. 
\begin{lemma} \label{le:chi_convex}
Under (a1) through (a3), $\chi(\theta)$ is log-convex in $\theta\in\mathbb{R}$. 
\end{lemma}

Let $\gamma^\dagger$ be the infimum of $\chi(\theta)$, i.e., 
\[
\gamma^\dagger = \inf_{\theta\in\mathbb{R}} \chi(\theta),  
\]
and define a set $\bar{\Gamma}$ as 
\[
\bar{\Gamma} = \{\theta\in\mathbb{R}; \chi(\theta)\le 1\}. 
\]
By Lemma \ref{le:chi_convex}, if $\gamma^\dagger<1$ and $\bar{\Gamma}$ is bounded, then $\bar{\Gamma}$ is a line segment and there exist just two real solutions to equation $\chi(\theta)=\cp(A_*(\theta))^{-1}=1$. We denote the solutions by $\underline{\theta}$ and $\bar{\theta}$, where $\underline{\theta}<\bar{\theta}$. 
When $\gamma^\dagger=1$, we define $\underline{\theta}$ and $\bar{\theta}$ as $\underline{\theta}=\min\{\theta\in\mathbb{Z}; \chi(\theta)=1\}$ and $\bar{\theta}=\max\{\theta\in\mathbb{Z}; \chi(\theta)=1\}$, respectively. It is expected that $\underline{\theta}=\bar{\theta}$ if $\gamma^\dagger=1$, but it is not obvious. 
If $\gamma^\dagger\le1$ and $\bar{\Gamma}$ is bounded, there exists a $\theta\in\bar{\Gamma}$ such that $\gamma^\dagger=\chi(\theta)$. 
We give the following condition. 
\begin{condition}
\begin{itemize}
\item[(a4)] $\bar{\Gamma}$ is bounded. 
\end{itemize}
\end{condition}

If $A_{-1}$ (resp.\ $A_1$) is a zero matrix, every element of $A_*(\theta)$ is monotone increasing (resp.\ decreasing) in $\theta$ and $\bar{\Gamma}$ is unbounded. Hence, if $\gamma^\dagger\le 1$, condition (a4) implies (a1).
The following properties correspond to those in Lemma 2.3 of \cite{Kijima93}. 
\begin{proposition} \label{pr:Gmatrix_existence} 
Assume (a2) through (a4). 
\begin{itemize} 
\item[(i)] If $\gamma^\dagger\le 1$, then $R$ and $G$ are finite. 
\item[(ii)] If $R$ is finite and there exist a $\theta_0\in\mathbb{R}$ and nonnegative nonzero vector $\bu$ such that $e^{\theta_0} \bu^\top R=\bu^\top$, then $\gamma^\dagger\le 1$. 
\item[(ii')] If $G$ is finite and there exist a $\theta_0\in\mathbb{R}$ and nonnegative nonzero vector $\bv$ such that $e^{\theta_0}  G \bv=\bv$, then $\gamma^\dagger\le 1$. 
\end{itemize}
\end{proposition}
\begin{proof}
{\it Statement (i).}\quad Assume $\gamma^\dagger\le 1$ and let $\theta^\dagger$ be a real number satisfying $\chi(\theta^\dagger)=\gamma^\dagger$. Since $A_*(\theta^\dagger)$ is irreducible, by Lemma 1 and Theorem 1 of \cite{Pruitt64}, there exists a positive vector $\bu$ satisfying $(\gamma^\dagger)^{-1} \bu^\top A_*(\theta^\dagger)  \le \bu^\top$. 
For this $\bu$, we obtain, by induction using (\ref{eq:Xn1_iteration}), inequality $e^{\theta^\dagger} \bu^\top X_n^{(1)} \le  \bu^\top$ for any $n\ge 0$. 
Hence, the sequence $\{X_n^{(1)}\}$ is element-wise nondecreasing and bounded, and the limit of the sequence, which is the minimum nonnegative solution to equation (\ref{eq:Rmatrix_equation1}), exists. Existence of the minimum nonnegative solution to equation (\ref{eq:Gmatrix_equation1}) is analogously proved. As a result, by Lemma \ref{le:RandGmatrix_solutions}, both $R$ and $G$ are finite.

{\it Statements (ii) and (ii').}\quad 
Assume the condition of Statement (ii). Then, we have 
\begin{align}
\bu^\top 
&=  e^{\theta_0} \bu^\top R
=  e^{\theta_0} \bu^\top ( R^2 A_{-1}+R A_0+A_1 ) 
= \bu^\top A_*(\theta_0), 
\label{eq:z0uR}
\end{align}
and this leads us to $\gamma^\dagger\le \chi(\theta_0)=\cp(A_*(\theta_0))^{-1}\le 1$. 
Statement (ii') can analogously be proved. 
\end{proof}

\begin{remark}
In statement (ii) of Proposition \ref{pr:Gmatrix_existence}, if such $\theta_0$ and $\bu$ exist, then, by (\ref{eq:z0uR}) and irreducibility of $A_*(\theta_0)$, we have $\theta_0=\theta^\dagger$ and $\bu$ is positive. An analogous result also holds for statement (ii'). 
\end{remark}

\begin{remark} \label{rm:chi_unbounded}
Consider the following nonnegative matrix $P$:
\[
P 
= \begin{pmatrix}
 \ddots & \ddots & \ddots & & & \cr
& A_{-1} & A_0 & A_1 & & \cr
& & A_{-1} & A_0 & A_1 & \cr
& & & \ddots & \ddots & \ddots  
\end{pmatrix}.  
\]
If the triplet $\{A_{-1},A_0,A_1\}$ is a Markov additive kernel, this $P$ corresponds to the transition probability matrix of a Markov additive process governed by the triplet. By Proposition \ref{pr:chi_unbounded} in Appendix \ref{sec:chi_unbounded}, if $P$ is irreducible, then $\chi(\theta)$ is unbounded in both the directions of $\theta$ and $\bar{\Gamma}$ is bounded. 
\end{remark}

%%%%%%%%%%%%%%%%%%%%%%%%
%
For the convergence parameters of $R$ and $G$, the following properties holds (for the case where the dimension of $A_*$ is finite, see Lemma 2.2 of \cite{He09} and Lemma 2.3 of  \cite{Miyazawa09}). 
\begin{lemma} \label{le:RandG_cp}
Assume (a2) through (a4).  If $\gamma^\dagger\le 1$ and $N$ is finite, then we have 
\begin{equation}
\cp(R) = e^{\bar{\theta}},\quad 
\cp(G) = e^{-\underline{\theta}}. 
\end{equation}
\end{lemma}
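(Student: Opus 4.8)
The plan is to prove both equalities by sandwiching each convergence parameter between matching bounds: I would obtain the lower bounds $\cp(R)\ge\bar\zeta$ and $\cp(G)\ge\underline{\zeta}^{-1}$ by a monotone-iteration argument, and the upper bounds $\cp(R)\le\bar\zeta$ and $\cp(G)\le\underline{\zeta}^{-1}$ from the Wiener--Hopf factorization of Lemma \ref{le:As_WHfac}. Throughout I would use the elementary fact that if a positive row vector $\bu$ satisfies $\bu^\top R\le r\,\bu^\top$ for some $r>0$, then $\bu^\top R^n\le r^n\bu^\top$ for all $n$, so $[R^n]_{i,j}\le r^n \bu_j/\bu_i$ and hence $\sum_n z^n R^n<\infty$ for every $z<1/r$; that is, $\cp(R)\ge 1/r$ (and symmetrically, $G\bv\le r\bv$ for a positive column vector $\bv$ gives $\cp(G)\ge 1/r$). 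I also note that, by Lemma \ref{le:chi_convex} and the definitions of $\underline{\zeta},\bar{\zeta}$, one has $\chi(\underline{\zeta})=\chi(\bar{\zeta})=1$, i.e.\ $\cp(A_*(\underline{\zeta}))=\cp(A_*(\bar{\zeta}))=1$, and that $A_*(z)$ is irreducible for every $z>0$ since $A_*$ is.

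First I would establish the lower bounds. Since $\cp(A_*(\bar{\zeta}))=1$ and $A_*(\bar{\zeta})$ is irreducible, Lemma 1 and Theorem 1 of Pruitt \cite{Pruitt64} (exactly as invoked in the proof of Lemma \ref{le:Gmatrix_existence}) furnish a positive vector $\bu$ with $\bu^\top A_*(\bar{\zeta})\le\bu^\top$. Running the iteration \eqref{eq:Xn1_iteration}, I would show by induction that $\bu^\top X_n^{(1)}\le\bar{\zeta}^{-1}\bu^\top$ for all $n$: the inductive step uses $\bu^\top (X_{n-1}^{(1)})^2\le\bar{\zeta}^{-2}\bu^\top$ together with $\bu^\top A_*(\bar{\zeta})\le\bu^\top$ to collapse the right-hand side into $\bar{\zeta}^{-1}\bu^\top A_*(\bar{\zeta})\le\bar{\zeta}^{-1}\bu^\top$. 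Passing to the limit gives $\bu^\top R\le\bar{\zeta}^{-1}\bu^\top$, whence $\cp(R)\ge\bar{\zeta}$. The dual argument, using a positive column vector $\bv$ with $A_*(\underline{\zeta})\bv\le\bv$ and the iteration \eqref{eq:Xn2_iteration}, yields $G\bv\le\underline{\zeta}\bv$ and therefore $\cp(G)\ge\underline{\zeta}^{-1}$.

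For the upper bounds I would argue by contradiction using the factorization \eqref{eq:As_WHfact_RG}. Suppose $\cp(R)>\bar{\zeta}$ and choose $z\in(\bar{\zeta},\cp(R))$. Because the lower bound already gives $\cp(G)^{-1}\le\underline{\zeta}\le\bar{\zeta}<z$, we have $z^{-1}<\cp(G)$, so $(I-zR)^{-1}=\sum_m (zR)^m$ and $(I-z^{-1}G)^{-1}=\sum_m (z^{-1}G)^m$ are both finite and nonnegative, while $(I-H)^{-1}=N$ is finite and nonnegative by \eqref{eq:NandH_relation}. Inverting \eqref{eq:As_WHfact_RG} then exhibits $(I-A_*(z))^{-1}=(I-z^{-1}G)^{-1}\,N\,(I-zR)^{-1}$ as a finite nonnegative matrix; since $A_*(z)\ge O$, this forces every partial sum of $\sum_n A_*(z)^n$ to be dominated by $(I-A_*(z))^{-1}$, so $\sum_n A_*(z)^n<\infty$, i.e.\ $\cp(A_*(z))\ge 1$ and $\chi(z)\le 1$. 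This contradicts $z>\bar{\zeta}$, hence $\cp(R)\le\bar{\zeta}$; the symmetric choice $z\in(\cp(G)^{-1},\underline{\zeta})$ (legitimate because $\bar{\zeta}\le\cp(R)$ is already known) yields $\cp(G)\le\underline{\zeta}^{-1}$. Combining with the lower bounds proves the lemma.

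I expect the delicate point to be the upper-bound step. A positive left subinvariant vector of $R$ does \emph{not} by itself produce a subinvariant vector of $A_*(z)$ (that transfer works only with an exact eigenvector, i.e.\ $\rho$-recurrence of $R$, which is not assumed), so the lower-bound method cannot simply be mirrored. The Wiener--Hopf factorization circumvents this, but its use requires care in two places: justifying, for countably infinite matrices, that the product of the possibly sign-indefinite factors $I-zR$, $I-H$ and $I-z^{-1}G$ may be inverted factor-by-factor on the relevant range of $z$, and verifying the elementary but essential implication that a nonnegative algebraic inverse of $I-A_*(z)$ dominates each partial sum $\sum_{n=0}^{N}A_*(z)^n$ and thereby certifies convergence of the Neumann series.
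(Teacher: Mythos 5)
Your proposal is correct, and its lower-bound half is exactly the paper's argument: Pruitt \cite{Pruitt64} applied to the irreducible $A_*(\bar{\zeta})$ (resp.\ $A_*(\underline{\zeta})$) gives the positive subinvariant vectors, and induction along the iteration (\ref{eq:Xn1_iteration}) yields $\bu^\top R\le \bar{\zeta}^{-1}\bu^\top$ and $G\bv\le\underline{\zeta}\bv$, hence $\cp(R)\ge\bar{\zeta}$ and $\cp(G)\ge\underline{\zeta}^{-1}$. You genuinely depart at the upper bound. The paper also argues by contradiction through the factorization (\ref{eq:As_WHfact_RG}), but via a second appeal to Pruitt's theorem, now applied to the irreducible $R$ at its own convergence parameter: assuming $\cp(R)=\bar{\zeta}+\varepsilon$, it takes a positive $\bv$ with $R\bv\le(\bar{\zeta}+\varepsilon)^{-1}\bv$ and transports this one vector through (\ref{eq:As_WHfact_RG}) and (\ref{eq:NandH_relation}) to get a positive $\by=(I-z^{-1}G)^{-1}N\bv$ with $A_*(z)\by\le\by$, contradicting $\chi(z)>1$. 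You instead choose $z$ strictly inside $(\bar{\zeta},\cp(R))$, so both Neumann series converge outright, invert the factorization wholesale, and conclude via the domination $C=I+A_*(z)C\ge\sum_{n=0}^{N}A_*(z)^n$. Your route buys something: it needs neither irreducibility of $R$ nor Pruitt at this step, only the definition of $\cp$. Its cost is the point you flag: entrywise finiteness of the triple product $(I-z^{-1}G)^{-1}N(I-zR)^{-1}$ and the factor-by-factor regrouping, since products of finite nonnegative infinite matrices need not be finite. The paper's vector form is marginally lighter there (only the matrix--vector quantities $N\bv$ and $(I-z^{-1}G)^{-1}N\bv$ must be finite), though the paper leaves exactly the same associativity/finiteness verification implicit; a clean fix for your version is to run the identity against one fixed column of $(I-zR)^{-1}$, reducing everything to matrix--vector products, after which the two proofs are on equal footing.
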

\begin{proof}
Since $\gamma^\dagger\le 1$ and $\bar{\Gamma}$ is bounded, $\bar{\theta}$ and $\underline{\theta}$ exist and they are finite. Furthermore, $R$ and $G$ are finite. 
For a $\theta\in\mathbb{R}$ such that $\chi(\theta)\le 1$, let $\bu$ be a positive vector satisfying $\bu^\top A_*(\theta)\le \bu^\top$. Such a $\bu$ exists since $A_*(\theta)$ is irreducible. As mentioned in the proof of Proposition \ref{pr:Gmatrix_existence}, for $X^{(1)}_n$ defined by (\ref{eq:Xn1_iteration}), if $\chi(\theta)\le 1$, then we have $e^\theta \bu^\top X^{(1)}_n\le \bu^\top$ for any $n\ge 0$ and this implies $e^\theta \bu^\top R\le \bu^\top$. 
Analogously, if $\chi(\theta)\le 1$, then there exists a positive vector $\bv$ satisfying $A_*(\theta)\bv\le \bv$ and we have $e^{-\theta} G \bv\le \bv$.
Therefore, setting $\theta$ at $\bar{\theta}$, we obtain $e^{\bar{\theta}} \bu^\top R\le \bu^\top$, and setting $\theta$ at $\underline{\theta}$, we obtain $e^{-\underline{\theta}} G \bv\le \bv$. Since $\bu$ and $\bv$ are positive, this leads us to $\cp(R)\ge e^{\bar{\theta}}$ and $\cp(G)\ge e^{-\underline{\theta}}$. 

Next, in order to prove $\cp(R)\le e^{\bar{\theta}}$, we apply a technique similar to that used in the proof of Theorem 1 of \cite{Pruitt64}. Suppose $\cp(R)>e^{\bar{\theta}}$. Then, there exists an $\varepsilon>0$ such that 
\[
\tilde{R}(\bar{\theta}+\varepsilon)=\sum_{n=0}^\infty e^{(\bar{\theta}+\varepsilon) n} R^n < \infty. 
\]
This $\tilde{R}(\bar{\theta}+\varepsilon)$ satisfies $e^{\bar{\theta}+\varepsilon} R \tilde{R}(\bar{\theta}+\varepsilon) = \tilde{R}(\bar{\theta}+\varepsilon)-I \le \tilde{R}(\bar{\theta}+\varepsilon)$. Hence, for $j\in\mathbb{Z}_+$, letting $\bv_j$ is the $j$-th column vector of $\tilde{R}(\bar{\theta}+\varepsilon)$, we have $e^{\bar{\theta}+\varepsilon} R \bv_j \le \bv_j$.
Furthermore, we have $e^{\bar{\theta}+\varepsilon} R \tilde{R}(\bar{\theta}+\varepsilon) \ge  e^{\bar{\theta}+\varepsilon} R \ge e^{\bar{\theta}+\varepsilon} X_1^{(1)} = e^{\bar{\theta}+\varepsilon} A_1$, and condition (a4) implies $A_1$ is nonzero. Hence, for some $j\in\mathbb{Z}_+$, both $R\bv_j$ and  $\bv_j$ are nonzero. Set $\bv$ at such a vector $\bv_j$. 
We have $\cp(G)\ge e^{-\underline{\theta}}> e^{-(\bar{\theta}+\varepsilon)}$. Hence, using (\ref{eq:NandH_relation2}) and (\ref{eq:As_WHfact_RG}), we obtain  
\begin{equation}
(I-A_*(\bar{\theta}+\varepsilon)) (I-e^{-(\bar{\theta}+\varepsilon)} G)^{-1} N \bv = (I-e^{\bar{\theta}+\varepsilon} R)\bv \ge \bzero, 
\end{equation}
where $\by=(I-e^{-(\bar{\theta}+\varepsilon)} G)^{-1} N \bv = \sum_{n=0}^\infty e^{-n (\bar{\theta}+\varepsilon)} G^n\,N \bv\ge N \bv$. 
Suppose $N \bv=\bzero$, then we have $R \bv = A_1 N \bv = \bzero$ and this contradicts that $R \bv$ is nonzero. Hence, $N \bv$ is nonzero and $\by$ is also nonzero and nonnegative. 
Since $A_*(\bar{\theta}+\varepsilon)$ is irreducible, the inequality $A_*(\bar{\theta}+\varepsilon) \by \le \by$ implies that $\by$ is positive and $\cp(A_*(\bar{\theta}+\varepsilon))\ge 1$. This contradicts that $\cp(A_*(\bar{\theta}+\varepsilon))=\chi(\bar{\theta}+\varepsilon)^{-1}<\chi(\bar{\theta})^{-1}=1$, and we obtain $\cp(R)\le e^{\bar{\theta}}$. 
In a similar manner, we can also obtain $\cp(G)\le e^{-\underline{\theta}}$, and this completes the proof.
\end{proof}

\begin{remark} \label{re:RandG_cp}
In Lemma \ref{le:RandG_cp}, it is not necessary that $R$ or $G$ is irreducible.
\end{remark}

Lemma \ref{le:RandG_cp} requires that $N$ is finite, but it cannot easily be verified since finiteness of $R$ and $G$ does not always imply that of $N$. We, therefore, introduce the following condition. 
\begin{condition}
\begin{itemize}
\item[(a5)] The nonnegative matrix $Q$ is irreducible. 
\end{itemize}
\end{condition}

Condition (a5) implies (a1), (a3) and (a4), i.e., under conditions (a2) and (a5), $A_{-1}$ and $A_1$ are nonzero, $A_*$ is irreducible and $\bar{\Gamma}$ is bounded. 
Let $\tilde{Q}$ be the fundamental matrix of $ Q$, i.e., $\tilde{Q}=\sum_{n=0}^\infty Q^n$. For $n\ge 0$, $Q_{0,0}^{(n)}$ is the $(0,0)$-block of $Q^n$, and $N$ is that of $\tilde{Q}$. Hence, we see that all the elements of $N$ simultaneously converge or diverge, finiteness of $R$ or $G$ implies that of $N$ and if $N$ is finite, it is positive. 
Furthermore, under (a2) and (a5), since $R$ is given as $R=A_1 N$ and $N$ is positive, each row of $R$ is zero or positive and we obtain the following proposition, which asserts that $R$ behaves just like an irreducible matrix. 
\begin{proposition} \label{pr:R_u}
Assume (a2) and (a5). If $R$ is finite, then it always satisfies one of the following two statements.
\begin{itemize}
\item[(i)] There exists a positive vector $\bu$ such that $e^{\bar{\theta}} \bu^\top R = \bu^\top$. 
\item[(ii)] $\sum_{n=0}^\infty e^{\bar{\theta}n} R^n <\infty$. 
\end{itemize}
\end{proposition}

Since the proof of this proposition is elementary and lengthy,  we put it in Appendix \ref{sec:proof_R_u}. By applying the same technique as that used in the proof of Theorem 4.1 of \cite{Kobayashi10}, we also obtain the following properties.
\begin{corollary} \label{co:cpRlimit}
Assume (a2) and (a5). For $i,j\in\mathbb{Z}_+$, if every element in the $i$-th row of $A_1$ is zero, we have $[R^n]_{i,j}=0$ for all $n\ge 1$; otherwise, we have $[R^n]_{i,j}>0$ for all $n\ge 1$ and 
\begin{equation}
\lim_{n\to\infty} ([R^n]_{i,j})^{\frac{1}{n}}=e^{-\bar{\theta}}. 
\label{eq:cpRlimit}
\end{equation}

\end{corollary}

To make this paper self-contained, we give a proof of the corollary in Appendix  \ref{sec:proof_R_u}.
By Theorem 2 of \cite{Pruitt64}, if the number of nonzero elements of each row of $A_*$ is finite, there exists a positive vector $\bu$ satisfying $\bu^\top A_*(\bar{\theta})=\bu^\top$. Also, if the number of nonzero elements of each column of $A_*$ is finite, there exists a positive vector $\bv$ satisfying $A_*(\underline{\theta})\bv=\bv$. To use this property, we give the following condition.
\begin{condition}
\begin{itemize}
\item[(a6)] The number of positive elements of each row and column of $A_*$ is finite.
\end{itemize}
\end{condition}
It is obvious that (a6) implies (a2). Under (a6), we can refine Proposition \ref{pr:Gmatrix_existence}, as follows.

\begin{proposition} \label{pr:RandG_existence2}
Assume (a5) and (a6). Then, $\gamma^\dagger\le 1$ if and only if $R$ and $G$ are finite.
\end{proposition}
\begin{proof}
By Proposition \ref{pr:Gmatrix_existence}, if $\gamma^\dagger\le 1$, then both $R$ and $G$ are finite. We, therefore, prove the converse.
Assume that $R$ and $G$ are finite. Then, $N$ is also finite and, by Lemma \ref{le:RandG_cp}, we have $\cp(R)=e^{\bar{\theta}}$. 
First, consider case (i) of Proposition \ref{pr:R_u} and assume that there exists a positive vector $\bu$ such that $e^{\bar{\theta}} \bu R = \bu$. Then, by statement (ii) of Proposition \ref{pr:Gmatrix_existence}, we have $\gamma^\dagger \le 1$.  
Next, consider case (ii) of Proposition \ref{pr:R_u} and assume $\sum_{n=0}^\infty e^{n \bar{\theta}} R^n <\infty$. Then, we have $(I-e^{\underline{\theta}} R)^{-1}=\sum_{k=0}^\infty e^{k \underline{\theta}} R^k<\infty$ since $\underline{\theta}\le \bar{\theta}$. Hence, we obtain, from (\ref{eq:NandH_relation2}) and (\ref{eq:As_WHfact_RG}) ,  
\begin{align}
%(I-A_*(\bar{\zeta})) (I-(\bar{\zeta})^{-1}G)^{-1} N = (I-\bar{\zeta} R), \label{eq:AsGandR_relation1}\\
 N (I-e^{\underline{\theta}} R)^{-1} (I-A_*(\underline{\theta})) = (I-e^{-\underline{\theta}} G). \label{eq:AsGandR_relation2}
\end{align}
Under the assumption of the proposition, there exists a positive vector $\bv$ satisfying $A_*(\underline{\theta})\bv=\bv$ since $\cp(A_*(\underline{\theta}))=1$. Hence, from (\ref{eq:AsGandR_relation2}), we obtain, for this $\bv$, $e^{-\underline{\theta}} G\bv =\bv$, and by statement (ii') of Proposition \ref{pr:Gmatrix_existence}, we have $\gamma^\dagger \le 1$. 
This completes the proof. 
\end{proof}

Recall that $\gamma^\dagger$ is defined as $\gamma^\dagger=\inf_{\theta\in\mathbb{R}}\cp(A_*(\theta))^{-1}$. Since if $Q$ is irreducible, all the elements of $\tilde{Q}=\sum_{n=0}^\infty Q^n$ simultaneously converge or diverge, we obtain, from Proposition \ref{pr:RandG_existence2}, the following property. 
\begin{proposition} \label{pr:Q_cp}
Assume (a5) and (a6). Then, $\gamma^\dagger\le 1$ if and only if $\tilde{Q}$ is finite. 
\end{proposition}
\begin{proof}
Under the assumption of the proposition, if $\gamma^\dagger \le 1$, then, by Proposition \ref{pr:RandG_existence2}, $R$ and $G$ are finite. Since $Q$ is irreducible, this implies that $N$ is finite and $\tilde{Q}$ is also finite. 
On the other hand, if $\tilde{Q}$ is finite, then $N$ is finite and $R$ and $G$ are also finite since the number of positive elements of each row of $A_1$ and that of each column of $A_{-1}$ are finite. Hence, by Proposition \ref{pr:RandG_existence2}, $\gamma^\dagger\le 1$ and this completes the proof. 
\end{proof}

For the convergence parameter of $Q$, we obtain, by this proposition, the following result. 
\begin{lemma} \label{le:Q_cp}
Under (a5) and (a6), we have $\cp(Q) = (\gamma^\dagger)^{-1} = \sup_{\theta\in\mathbb{R}} \cp(A_*(\theta))$ and $Q$ is $(\gamma^\dagger)^{-1}$-transient. 
\end{lemma}
\begin{proof}
For $\beta>0$, $\beta Q$ is a nonnegative block tri-diagonal matrix, whose block matrices are given by $\beta A_{-1}$, $\beta A_0$ and $\beta A_1$. Hence, the assumption of this lemma also holds for $\beta Q$. 
Define $\gamma(\beta)$ as 
\begin{equation}
\gamma(\beta) 
= \inf_{\theta\in\mathbb{R}} \cp(\beta A_*(\theta))^{-1} 
= \beta \inf_{\theta\in\mathbb{R}} \cp(A_*(\theta))^{-1}
= \beta \gamma^\dagger.
\end{equation}
By Proposition \ref{pr:Q_cp}, if $\gamma(\beta) = \beta \gamma^\dagger\le 1$, then the fundamental matrix of $\beta Q$, $\widetilde{\beta Q}$, is finite and $\cp(\beta Q)=\beta^{-1} \cp(Q)\ge 1$. Hence, if $\beta\le (\gamma^\dagger)^{-1}$, then $\cp(Q)\ge \beta$. Setting $\beta$ at $(\gamma^\dagger)^{-1}$, we obtain $\cp(Q)\ge (\gamma^\dagger)^{-1}$. 
Next we prove $\cp(Q)\le (\gamma^\dagger)^{-1}$. Suppose $\cp(Q)> (\gamma^\dagger)^{-1}$, then there exists an $\varepsilon>0$ such that the fundamental matrix of $((\gamma^\dagger)^{-1}+\varepsilon) Q$ is finite. By Proposition \ref{pr:Q_cp}, this implies 
\begin{equation}
\gamma((\gamma^\dagger)^{-1}+\varepsilon)) = ((\gamma^\dagger)^{-1}+\varepsilon) \gamma^\dagger = 1+\varepsilon \gamma^\dagger \le 1, 
\end{equation}
and we obtain $\gamma^\dagger\le 0$. This contradicts $\gamma^\dagger> 0$, which is obtained from the irreducibility of $A_*$. Hence, we obtain $\cp(Q)\le (\gamma^\dagger)^{-1}$.
Setting $\beta$ at $(\gamma^\dagger)^{-1}$, we have $\gamma(\beta)=\gamma((\gamma^\dagger)^{-1})\le 1$ and the fundamental matrix of $\beta Q=(\gamma^\dagger)^{-1}Q$ is finite. This means $Q$ is $(\gamma^\dagger)^{-1}$-transient.
\end{proof}

\begin{remark}
In the case where the phase space is finite, Lemma \ref{le:Q_cp} corresponds to Lemma 2.3 of \cite{Miyazawa15}. Assuming condition (a6), we extended that lemma to the case of infinite phase space. 
\end{remark}

%\begin{remark} \label{rm:Q_cp}
%Consider the case where $A_*$ is reducible. Assume $A_*$ has a closed irreducible class $D$. 
%
%Using the triplet $\{(A_{-1})_D,(A_0)_D,(A_1)_D\}$, we can construct a nonnegative block tri-diagonal matrix $Q_{(D)}$ like $Q$, and applying Theorem \ref{th:Q_cp}, we obtain, under certain conditions, $\cp(Q_{(D)})^{-1}=\inf_{z>0} \cp((A_*(z))_D)^{-1}$, where $(A_*(z))_D=z^{-1} (A_{-1})_D +(A_0)_D+z (A_1)_D$.
%
%Hence, if $\cp(Q)=\cp(Q_{(D)})$, we have $\cp(Q)^{-1}=\inf_{z>0} \cp((A_*(z))_D)^{-1}$. 
%\end{remark}

%%%%%%%%%%%%%%
%
\begin{remark}
For nonnegative block multi-diagonal matrices, a property similar to Lemma \ref{le:Q_cp} holds. We demonstrate it in the case of block quintuple-diagonal matrix.
Let $Q$ be a nonnegative block matrix defined as 
\[
Q 
= \begin{pmatrix}
A_0 & A_1 & A_2 & & & & \cr
A_{-1} & A_0 & A_1 & A_2 & & & \cr
A_{-2} & A_{-1} & A_0 & A_1 & A_2& & \cr
& A_{-2} & A_{-1} & A_0 & A_1 & A_2& \cr
& & \ddots & \ddots & \ddots & \ddots & \ddots 
\end{pmatrix}, 
\]
where $A_i,\,i\in\{-2,-1,0,1,2\}$, are nonnegative square matrices with a countable dimension.
For $\theta\in\mathbb{R}$, define a matrix function $A_*(\theta)$ as 
\begin{equation}
A_*(\theta) = \sum_{i=-2}^2 e^{i \theta} A_i.
\end{equation}
Then, assuming that $Q$ is irreducible and the number of positive elements of each row and column of $A_*(0)$ is finite, we can obtain 
\begin{equation}
\cp(Q) = \sup_{\theta\in\mathbb{R}} \cp(A_*(\theta)).
\label{eq:cpQ_5diagonal}
\end{equation}
Here we prove this equation. Define block matrices $\hat{A}_i,\,i\in\{-1,0,1\}$, as
\[
\hat{A}_{-1} = 
\begin{pmatrix} A_{-2} & A_{-1} \cr O & A_{-2} \end{pmatrix}, \quad
\hat{A}_0 = 
\begin{pmatrix} A_0 & A_1 \cr A_{-1} & A_0 \end{pmatrix}, \quad
\hat{A}_1 = 
\begin{pmatrix} A_2 & O \cr A_1 & A_2 \end{pmatrix},  
\]
then $Q$ is represented in block tri-diagonal form in terms of these block matrices. For $\theta\in\mathbb{R}$, define a matrix function $\hat{A}_*(\theta)$ as 
\[
\hat{A}_*(\theta) 
= e^{-\theta} \hat{A}_{-1} + \hat{A}_0 + e^{\theta} \hat{A}_1
= \begin{pmatrix} 
e^{-\theta} A_{-2} + A_0 + e^{\theta} A_2 & e^{-\theta/2} (e^{-\theta/2} A_{-1} + e^{\theta/2} A_1) \cr 
e^{\theta/2} (e^{-\theta/2} A_{-1} + e^{\theta/2} A_1) & e^{-\theta} A_{-2} + A_0 + e^{\theta} A_2
\end{pmatrix}, 
\]
then, by Lemma \ref{le:Q_cp}, we obtain $\cp(Q) = \sup_{\theta\in\mathbb{R}} \cp(\hat{A}_*(\theta))$. Hence, in order to prove equation (\ref{eq:cpQ_5diagonal}), it suffices to show that, for any $\theta\in\mathbb{R}$, 
\begin{align}
\cp(A_*(\theta/2)) 
&= \sup\{ \alpha\in\mathbb{R}_+; \alpha \bx A_*(\theta/2)\le \bx\ \mbox{for some}\ \bx>\bzero^\top \} \cr
&= \sup\{ \alpha\in\mathbb{R}_+; \alpha \hat{\bx} \hat{A}_*(\theta)\le \hat{\bx}\ \mbox{for some}\ \hat{\bx}>\bzero^\top \} 
= \cp(\hat{A}_*(\theta)). 
\label{eq:AshatAs}
\end{align}
For $\theta\in\mathbb{R}$ and $\alpha\in\mathbb{R}_+$, if $\alpha \bx A_*(\theta/2)\le \bx$ for some $\bx>\bzero^\top$, then, letting $\hat{\bx}=(\bx, e^{-\theta/2} \bx)$, we have $\alpha \hat{\bx} \hat{A}_*(\theta) \le \hat{\bx}$. 
On the other hand, if $\alpha \hat{\bx} \hat{A}_*(\theta) \le \hat{\bx}$ for some $\hat{\bx}=(\hat{\bx}_1,\hat{\bx}_2)>\bzero^\top$, then letting $\bx=\hat{\bx}_1+e^{\theta/2} \hat{\bx}_2$, we have $\alpha \bx A_*(\theta/2)\le \bx$. 
As a result, we obtain equations (\ref{eq:AshatAs}). 

\end{remark}

%%%%%%%%%%%%%%%%%%%%%%%%%%%%%%%%%%%%%%%
%
%  Section 3
%
%%%%%%%%%%%%%%%%%%%%%%%%%%%%%%%%%%%%%%%
\section{Multi-dimensional QBD process} \label{sec:modelandresults}

Before explaining models, we introduce some notations. For a finite set $\alpha$, we denote by $\scrP(\alpha)$ the set of all subsets of $\alpha$, including the empty set. For $\beta\in\scrP(\alpha)$, we define $\beta^C$ as $\beta^C=\alpha\setminus\beta$. 
Let $d$ be the dimension of the QBD process and define a set $D$ as $D=\{1,2,...,d\}$. We use $\scrP(D)$ as an index set. 
For a vector $\bx\in\mathbb{R}^d$, we denote by $x(l)$ the $l$-th element of $\bx$ and, for $\alpha\in\scrP(D)$, denote by $\bx(\alpha)$ a part of $\bx$ specified by $\alpha$, i.e., $\bx(\alpha)=(x(l),l\in \alpha)$. For example, when $d=5$ and $\alpha=\{2,4,5\}$, we have $\bx(\alpha)=(x(2),x(4),x(5))$, $\bx(\alpha^C)=(x(1),x(3))$ and $\bx=(\bx(\alpha),\bx(\alpha^C))$. Note that, if $\alpha=\emptyset$, $\bx(\alpha)$ means nothing.
For a set $\alpha$, we denote by $|\alpha|$ the cardinality of $\alpha$. We have $|\scrP(D)|=2^d$. 
For a real vector $\bx$ and a real number $c$, if every element of $\bx$ is equal to $c$, we express it as $\bx=c$. We analogously define $\bx<c$, $\bx>c$ and so on. 

\subsection{Model description} \label{sec:modeldescription}
%
%A discrete-time multi-dimensional QBD process is an extension of ordinary discrete-time (one-dimensional) QBD process. 
Let $\{\bY_n\}=\{(\bX_n,J_n)\}$ be a $d$-dimensional QBD process on the state space $\mathbb{S}_+=\mathbb{Z}_+^d\times S_0$, where $S_0=\{1,2,...,s_0\}$ is the phase space, $\bX_n=(X_n(1),X_n(2),...,X_n(d))$ is the level state and $J_n$ is the phase state. We assume the cardinality of $S_0$, $s_0$, is finite. 
The level process $\{\bX_n\}$ is skip free in all directions, which means that, for $n\ge 0$, $\bX_{n+1}-\bX_n\in\{-1,0,1\}^d$. The $d$-dimensional QBD process is a Markov chain in which the transition probabilities of the level process vary according to the phase state. 
Divide $\mathbb{Z}_+^d$ into $2^d$ exclusive subsets defined by 
\[
\mathbb{B}^\alpha=\{\bx\in\mathbb{Z}_+^d; \bx(\alpha)>0,\ \bx(\alpha^C)=0 \},\ \alpha\in\scrP(D). 
\]
Since we have $\mathbb{B}^\alpha\cap\mathbb{B}^\beta=\emptyset$ for $\alpha\ne\beta$ and $\mathbb{Z}_+^d=\bigcup_{\alpha\in\scrP(D)} \mathbb{B}^\alpha$, the class $\{\mathbb{B}^\alpha; \alpha\in\scrP(D)\}$ is a partition of $\mathbb{Z}_+^d$. $\mathbb{B}^\emptyset$ is the set containing only the origin and $\mathbb{B}^D$ is the set of all positive points in $\mathbb{Z}_+^d$. 
Since the level process is skip free, the transition probabilities of $\{\bY_n\}$ are given by, for every $\bx\in\mathbb{Z}_+^d$ and $\alpha\in\scrP(D)$ such that $\bx\in\mathbb{B}^\alpha$ and for $i,j\in S_0$,
\begin{align}
&\mathbb{P}(\bY_{n+1}=(\bx+\bl,j)\,|\,\bY_n=(\bx,i)) \nonumber \\
&= \left\{ \begin{array}{ll} 
a_{\bl}^{\alpha,\beta}(i,j), & \mbox{if $\bl(\alpha)\in\{-1,0,1\}^{|\alpha|}$, $\bl(\alpha^C)\in\{0,1\}^{d-|\alpha|}$, $\bx+\bl\in\mathbb{B}^\beta$ for some $\beta\in\scrP(D)$}, \cr
0, & \mbox{otherwise}, 
\end{array} \right.
\label{eq:tp_calL}
\end{align}
where each $a_{\bl}^{\alpha,\beta}(i,j)$ is a nonnegative real number less than or equal to $1$. The transition probabilities $\{a_{\bl}^{\alpha,\beta}(i,j)\}$ satisfy several equalities so that the transition probability matrix $P$ defined below becomes stochastic, for example, for $\alpha\in\scrP(D)$ and $i\in S_0$, 
\begin{equation}
\sum_{\beta\in\scrP(\alpha^C)}\ \sum_{\bl(\alpha)\in\{-1,0,1\}}\ \sum_{\bl(\alpha^C)\in\{0,1\}}\ \sum_{k\in S_0} a_{\bl}^{\alpha,\alpha\cup\beta}(i,k) =1.
\label{eq:sum1_a}
\end{equation}
We denote by $A_{\bl}^{\alpha,\beta}$ the matrix whose $(i,j)$-element is $a_{\bl}^{\alpha,\beta}(i,j)$, i.e., $A_{\bl}^{\alpha,\beta}=(a_{\bl}^{\alpha\beta}(i,j); i,j\in S_0)$. 
We symbolically denote by $P$ the transition probability matrix of $\{\bY_n\}$, i.e., 
\[
P=\left( P_{\bx,\bx'}; \bx,\bx'\in\mathbb{Z}_+^d \right), 
\]
where $P_{\bx,\bx'}=(p_{(\bx,j),(\bx',j')}; j,j'\in S_0)$ and $p_{(\bx,j),(\bx',j')}=\mathbb{P}(\bY_1=(\bx,j)\,|\,\bY_0=(\bx',j'))$. For every $\bx\in\mathbb{Z}_+^d$ and $\alpha\in\scrP(D)$ such that $\bx\in\mathbb{B}^\alpha$,
\[
P_{\bx,\bx+\bl} 
= \left\{ \begin{array}{ll} 
A^{\alpha,\beta}_{\bl}, & \mbox{ if $\bl(\alpha)\in\{-1,0,1\}^{|\alpha|}$, $\bl(\alpha^C)\in\{0,1\}^{d-|\alpha|}$, $\bx+\bl\in\mathbb{B}^\beta$ for some $\beta\in\scrP(D)$}, \cr
O, & \mbox{otherwise}.
\end{array} \right.
\]
This means that the matrix $P$ has \textit{a multiple block tri-diagonal structure} and it is clue to analyzing the $d$-dimensional QBD process. Since $S_0$ is finite, the number of positive elements of each row and column of $P$ is finite. Hence, $P$ and matrices come from $P$ satisfy condition (a6) in Section \ref{sec:RandGmatrix}. 
%
%From the definition of the $d$-dimensional QBD process, we see that the $d$-dimensional QBD process is \textit{a $d$-dimensional skip-free reflecting random walk} whose transition probabilities vary according to the state of the phase process (background process). 

We assume the following condition throughout this section. 
\begin{assumption} \label{as:QBD_stable}
The $d$-dimensional QBD process $\{\bY_n\}$ is irreducible and positive recurrent. 
\end{assumption}
We denote by $\bnu$ the stationary distribution of $\{\bY_n\}$, where $\bnu=(\bnu_{\bx},\bx\in\mathbb{Z}_+^d)$, $\bnu_{\bx}=(\nu_{\bx,j}, j\in S_0)$ and $\nu_{\bx,j}$ is the stationary probability that the $d$-dimensional QBD process is in the state of $(\bx,j)$ in steady state. 
Our main aim in this section is to give lower bounds for the directional asymptotic decay rates of the stationary distribution, i.e., for $k\in D$,  to give a positive number $\theta_k$ satisfies, for $\bx(\{k\}^C)\in\mathbb{Z}_+^{d-1}$ and $j\in S_0$, 
\begin{equation}
\liminf_{n\to\infty} \frac{1}{n} \log \nu_{(x(k)=n,\bx(\{k\}^C)),j} \ge -\theta_k.
\end{equation}

%%%%%%%%%%%%%%%%%%%%%%%%%%%%%%%%%%%%%%%%%%
%
%
\subsection{Matrix geometric solutions for the stationary distribution} \label{sec:MGsolution}

For $k\in D$, we represent the $d$-dimensional QBD process $\{\bY_n\}=\{(\bX_n,J_n)\}$ as a one-dimensional QBD process 
\[
\{\bY^{\{k\}}_n\}=\{(X_n(k),(\bX_n(\{k\}^C),J_n))\},
\]
where $X_n(k)$ is the level state and $(\bX_n(\{k\}^C),J_n)=((X_n(l),l\in D\setminus\{k\}),j)$ is the phase state. Since the level process $\{X_n(k)\}$ is skip free, the transition probability matrix $P$ is represented, in block form, as
\begin{equation} \label{eq:P1_blockform}
P = 
\begin{pmatrix}
\hat{P}^{\{k\}}_0 & \hat{P}^{\{k\}}_1 & & & \cr
\hat{P}^{\{k\}}_{-1} & P^{\{k\}}_0 & P^{\{k\}}_1 & & \cr
& P^{\{k\}}_{-1} & P^{\{k\}}_0 & P^{\{k\}}_1 & \cr
& & \ddots & \ddots & \ddots 
\end{pmatrix}, 
\end{equation}
where, for $l\in\{0,1\}$ and $l'\in\{-1,0,1\}$, 
\begin{align*}
&\hat{P}^{\{k\}}_l = \left( P_{\bx,\bx'}; x(k)=0,x'(k)=l,\,\bx(\{k\}^C),\bx'(\{k\}^C)\in\mathbb{Z}_+^{d-1} \right), \\
&\hat{P}^{\{k\}}_{-1} = \left( P_{\bx,\bx'}; x(k)=1,x'(k)=0,\,\bx(\{k\}^C),\bx'(\{k\}^C)\in\mathbb{Z}_+^{d-1} \right), \\
&P^{\{k\}}_{l'} = \left( P_{\bx,\bx'}; x(k)=2,x'(k)=2+l',\,\bx(\{k\}^C),\bx'(\{k\}^C)\in\mathbb{Z}_+^{d-1} \right).
\end{align*}
We also represent the stationary distribution $\bnu$ in the same block form, i.e., 
\[
\bnu^{\{k\}} = (\bnu^{\{k\}}_n, n\in\mathbb{Z}_+),\quad 
\bnu^{\{k\}}_n = \left( \bnu_{\bx}; x(k)=n, \bx(\{k\}^C)\in\mathbb{Z}_+^{d-1} \right).
\]
Let $R^{\{k\}}$ be the rate matrix generated from the triplet $\{ P^{\{k\}}_{-1},P^{\{k\}}_0,P^{\{k\}}_1 \}$, then $\bnu^{\{k\}}$ is given in the following matrix geometric form:
\begin{equation}
\bnu^{\{k\}}_n = \bnu^{\{k\}}_1 (R^{\{k\}})^{n-1},\ n\ge 1.
\end{equation}
%
%Hence, the vector generating function of the stationary stationary distribution, $\bnu^{\{k\}}_*(z)$, is given by 
%
% \begin{equation}
%\bnu^{\{k\}}_*(z) 
%= \sum_{n=0}^\infty z^n \bnu^{\{k\}}_n 
%= \bnu^{\{k\}}_0 + z \bnu^{\{k\}}_1 \tilde{R}^{\{k\}}(z), 
%\end{equation}
%
%where $\tilde{R}^{\{k\}}(z) = \sum_{n=0}^\infty (z R^{\{k\}})^n$. 
%
Define a truncation of $P$, $Q^{\{k\}}$, as
\begin{equation} \label{eq:Q1_blockform}
Q^{\{k\}} = 
\begin{pmatrix}
P^{\{k\}}_0 & P^{\{k\}}_1 & & \cr
P^{\{k\}}_{-1} & P^{\{k\}}_0 & P^{\{k\}}_1 & \cr
& \ddots & \ddots & \ddots 
\end{pmatrix},  
\end{equation}
and a matrix function $P^{\{k\}}_*(\theta)$ and a set $\bar{\Gamma}^{\{k\}}$ as 
\begin{align}
&P^{\{k\}}_*(\theta) = e^{-\theta} P^{\{k\}}_{-1} + P^{\{k\}}_0 + e^\theta P^{\{k\}}_1, \label{eq:Pk_def}\\
&\bar{\Gamma}^{\{k\}} = \left\{ \theta\in\mathbb{R}; \cp(P^{\{k\}}_*(\theta))^{-1} \le 1 \right\}. 
\end{align}

In order to obtain lower bounds for the directional asymptotic decay rates of the stationary distribution, we assume the following condition, which corresponds to condition (a5) in Section \ref{sec:RandGmatrix}.
\begin{assumption} \label{as:PkRk_irreducible}
For $k\in D$, $Q^{\{k\}}$ is irreducible. 
\end{assumption}

Let $k$ be an integer in $D$. Under this assumption, $Q^{\{k\}}$ satisfies conditions (a2) through (a4) and if $R^{\{k\}}$ is finite, the N-matrix generated from the triplet $\{ P^{\{k\}}_{-1},P^{\{k\}}_0,P^{\{k\}}_1 \}$ is also finite. Hence, by Lemma \ref{le:RandG_cp}, if $\bar{\Gamma}^{\{k\}}$ contains at least one positive number, we have 
\[
\cp(R^{\{k\}}) = e^{\bar{\theta}^{\{k\}}},\quad 
\bar{\theta}^{\{k\}} = \max\!\left\{ \theta\in\mathbb{R}; \cp(P^{\{k\}}_*(\theta))^{-1}\le 1 \right\}> 0. 
\]
Regard the triplet $\{ P^{\{k\}}_{-1},P^{\{k\}}_0,P^{\{k\}}_1 \}$ as a Markov additive kernel (MA-kernel for short) and let 
\[
\{\tilde{\bY}^{\{k\}}_n\}= \left\{ (\tilde{X}^{\{k\}}_n(k), (\tilde{\bX}^{\{k\}}_n(\{k\}^C),\tilde{J}_n)) \right\}
\]
be a Markov additive process (MA-process for short) on the state space $\mathbb{Z}\times(\mathbb{Z}_+^{d-1}\times S_0)$, governed by the Markov additive kernel. 
Then, the occupation measure for the Markov additive process is given by $\{(R^{\{k\}})^l, l\ge 0\}$, i.e., for a stopping time $\tau$ defined as $\tau=\inf\{n\ge 1; \tilde{X}^{\{k\}}_n(k)=0\}$ and for $(\bx(\{k\}^C),j),(\bx'(\{k\}^C),j')\in\mathbb{Z}_+^{d-1}\times S_0$, 
\begin{align}
&[(R^{\{k\}})^l]_{(\bx(\{k\}^C),j),(\bx'(\{k\}^C),j')} \cr
&\quad = \mathbb{E}\!\left[ \sum_{n=0}^{\tau-1} 1\Big(\tilde{\bY}^{\{k\}}_n=(l,(\bx'(\{k\}^C),j'))\Big) \,\Big|\, \tilde{\bY}^{\{k\}}_0=(0,(\bx(\{k\}^C),j)) \right], 
\end{align}
where $1(\cdot)$ is an indicator function. 
%Since $Q^{\{k\}}$ is irreducible, the Markov additive kernel $\{ P^{\{k\}}_{-1},P^{\{k\}}_0,P^{\{k\}}_1 \}$ is 1-arithmetic, where the kernel $\{ P^{\{k\}}_{-1},P^{\{k\}}_0,P^{\{k\}}_1 \}$ is said to be 1-arithmetic if, for some $(\bx(\{k\}^C),j)\in\mathbb{Z}_+^{d-1}\times S_0$, the greatest common divisor of 
%\[
%\left\{ i_1+i_2+\cdots+i_l; \big[P^{\{k\}}_{i_1}P^{\{k\}}_{i_2} \cdots P^{\{k\}}_{i_l}\big]_{(\bx(\{k\}^C),j),(\bx(\{k\}^C),j)}>0,\ i_1,i_2,...,i_l\in\{-1,0,1\},\ l\ge 1 \right\}
%\]
%is one (for the definition of 1-arithmetic, see \cite{Miyazawa04}). 
%
%Hence, by applying the same technique as that used in the proof of Theorem 4.1 of \cite{Kobayashi10}, 
We have, by Corollary \ref{co:cpRlimit}, for $(\bx(\{k\}^C),j)\in \mathbb{Z}_+^{d-1}\times S_0$ and for $(\bx'(\{k\}^C),j')\in \mathbb{Z}_+^{d-1}\times S_0$ such that the $(\bx'(\{k\}^C),j')$-row of $P_1^{\{k\}}$ contains at least one positive element, 
\begin{equation}
\lim_{n\to\infty} \frac{1}{n} \log [(R^{\{k\}})^n]_{(\bx'(\{k\}^C),j'),(\bx(\{k\}^C),j)} = - \bar{\theta}^{\{k\}}. 
\label{eq:Rk_limit}
\end{equation}
From  the matrix geometric solution for the stationary distribution, we obtain, for $(\bx(\{k\}^C),j)\in \mathbb{Z}_+^{d-1}\times S_0$ and for $(\bx'(\{k\}^C),j')\in \mathbb{Z}_+^{d-1}\times S_0$ such that the $(\bx'(\{k\}^C),j')$-row of $P_1^{\{k\}}$ contains at least one positive element, 
\begin{equation}
\nu_{((x(k)=n,\bx(\{k\}^C)),j)} \ge \nu_{((x'(k)=1,\bx'(\{k\}^C)),j')} [(R^{\{k\}})^{n-1}]_{(\bx'(\{k\}^C),j'),(\bx(\{k\}^C),j)}, 
\label{eq:nu_inequality}
\end{equation}
where $\nu_{((x'(k)=1,\bx'(\{k\}^C)),j')}>0$ since $P$ is irreducible. Hence, from (\ref{eq:Rk_limit}) and (\ref{eq:nu_inequality}), we obtain
\begin{equation}
\liminf_{n\to\infty} \frac{1}{n} \log \nu_{((x(k)=n,\bx(\{k\}^C)),j)} \ge - \bar{\theta}^{\{k\}}. 
\label{eq:nu_limit1}
\end{equation}
In the following subsections, using the multiple block tri-diagonal structure of $P$, we will obtain upper bounds for $\bar{\theta}^{\{k\}}$.

%%%%%%%%%%%%%%%%%%%%%%%%%%%%%%%%%%%%%%%%%%
%
%
\subsection{Lower bounds for the asymptotic decay rates}
%Recall that, for a $d$-dimensional vector $\bx$, $x(k)$ is the $k$-th element of $\bx$ and, for $\alpha\in\scrP(D)$, $\bx(\alpha)=(x(k),k\in\alpha)$ and $\bx(\alpha^C)=(x(k),k\in D\setminus\alpha)$. For simplicity, we denote, for example, $\bx(\{1,2\})$ by $\bx(1,2)$. 

Hereafter, we denote by $\btheta$ a $d$-dimensional real vector. For $k_1\in D$, consider the one-dimensional QBD process $\{\bY^{\{k_1\}}\}=\{(X_n(k_1),(\bX_n(\{k_1\}^C),J_n))\}$ and matrix function $P^{\{k_1\}}_*(\theta(k_1))$, defined in the previous subsection. From (\ref{eq:Pk_def}), we see that $P^{\{k_1\}}_*(\theta(k_1))$ is represented as
\begin{equation}
P^{\{k_1\}}_*(\theta(k_1)) = \left( P^{\{k_1\}}_{*,\bx(\{k_1\}^C),\bx'(\{k_1\}^C)}(\theta(k)); \bx(\{k_1\}^C),\bx'(\{k_1\}^C)\in\mathbb{Z}_+^{d-1} \right), 
\label{eq:Pks_theta}
\end{equation}
where $\theta(k_1)$ is the $k_1$-th element of $\btheta$ and
\[
P^{\{k_1\}}_{*,\bx(\{k_1\}),\bx'(\{k_1\}^C)}(\theta(k_1)) = \sum_{l\in\{-1,0,1\}} e^{l \theta(k_1)} P_{(x(k_1)=2,\bx(\{k_1\}^C)),(x'(k_1)=2+l,\bx'(\{k_1\}^C))}. 
\]
In general,  we define, for a nonempty set $\alpha\in\scrP(D)$, a matrix function $P^{\alpha}_*(\btheta(\alpha))$ as
\begin{equation}
P^{\alpha}_*(\btheta(\alpha)) = \left( P^{\alpha}_{*,\bx(\alpha^C),\bx'(\alpha^C)}(\btheta(\alpha)); \bx(\alpha^C),\bx'(\alpha^C)\in\mathbb{Z}_+^{d-|\alpha|} \right), 
\label{eq:Palpha_theta}
\end{equation}
where 
\[
P^{\alpha}_{*,\bx(\alpha^C),\bx'(\alpha^C)}(\btheta(\alpha)) 
= \sum_{\bl(\alpha)\in\{-1,0,1\}^{|\alpha|}} e^{\langle \bl(\alpha), \btheta(\alpha) \rangle} P_{(\bx(\alpha)=2,\bx(\alpha^C)),(\bx'(\alpha)=\bx(\alpha)+\bl(\alpha),\bx'(\alpha^C))}. 
\]
where $\langle \ba,\bb \rangle$ is the inner product of vectors $\ba$ and $\bb$. 
In terms of $P^{\alpha}_*(\btheta(\alpha))$, we also define a point set $\bar{\Gamma}^{\alpha}$ as
\begin{equation}
\bar{\Gamma}^{\alpha} = \left\{ \btheta\in\mathbb{R}^d; \cp(P^{\alpha}_*(\btheta(\alpha)))^{-1} \le 1 \right\}.
\label{eq:barGamma_alpha}
\end{equation}

For $\alpha\in\scrP(D)$ such that $1\le |\alpha|\le d-1$ and for $k\in D\setminus\alpha$, we consider a relation between $P^{\alpha}_*(\btheta(\alpha)) $ and $P^{\alpha\cup\{k\}}_*(\btheta(\alpha\cup\{k\}))$. 
Represent the subprocess $\{(\bX_n(\alpha^C),J_n)\}$  of the original $d$-dimensional QBD process as a one-dimensional QBD process $\{(X_n(k),(\bX_n(\alpha^C\setminus\{k\}),J_n))\}$, where $X_n(k)$ is the level state and $(\bX_n(\alpha^C\setminus\{k\}),J_n)$ the phase state.  Nonnegative matrix $P^{\alpha}_*(\bzero_{|\alpha|})$ is the transition probability matrix for the process $\{(\bX_n(\alpha^C),J_n)\}$ when $\bX_n(\alpha^C)>1$, where $\bzero_{|\alpha|}$ is a $|\alpha|$-dimensional vector of $0$'s.
Since the level process $\{X_n(k)\}$ is skip free, we see from (\ref{eq:Pks_theta}) that $P^{\alpha}_*(\btheta(\alpha))$ is represented, in block tri-diagonal form, as
\begin{equation} 
P^{\alpha}_*(\btheta(\alpha)) = 
\begin{pmatrix}
\hat{P}^{\alpha,\{k\}}_{*,0}(\btheta(\alpha)) & \hat{P}^{\alpha,\{k\}}_{*,1}(\btheta(\alpha)) & & & \cr
\hat{P}^{\alpha,\{k\}}_{*,-1}(\btheta(\alpha)) & P^{\alpha,\{k\}}_{*,0}(\btheta(\alpha)) & P^{\alpha,\{k\}}_{*,1}(\btheta(\alpha)) & & \cr
& P^{\alpha,\{k\}}_{*,-1}(\btheta(\alpha)) & P^{\alpha,\{k\}}_{*,0}(\btheta(\alpha)) & P^{\alpha,\{k\}}_{*,1}(\btheta(\alpha)) & \cr
& & \ddots & \ddots & \ddots 
\end{pmatrix}, 
\label{eq:Palpha_blockform}
\end{equation}
where, for $l\in\{0,1\}$ and $l'\in\{-1,0,1\}$,  
\begin{align*}
&\hat{P}^{\alpha,\{k\}}_{*,l}(\btheta(\alpha)) \cr
&\quad= \left( P^{\alpha}_{*,\bx(\alpha^C),\bx'(\alpha^C)}(\btheta(\alpha)); x(k)=0,x'(k)=l,\,\bx(\alpha^C\setminus\{k\}),\bx'(\alpha^C\setminus\{k\})\in\mathbb{Z}_+^{d-|\alpha|-1} \right), \\
&\hat{P}^{\alpha,\{k\}}_{*,-1}(\btheta(\alpha)) \cr
&\quad= \left( P^{\alpha}_{*,\bx(\alpha^C),\bx'(\alpha^C)}(\btheta(\alpha)); x(k)=1,x'(k)=0,\,\bx(\alpha^C\setminus\{k\}),\bx'(\alpha^C\setminus\{k\})\in\mathbb{Z}_+^{d-|\alpha|-1} \right), \\
&P^{\alpha,\{k\}}_{*,l}(\btheta(\alpha)) \cr
&\quad= \left( P^{\alpha}_{*,\bx(\alpha^C),\bx'(\alpha^C)}(\btheta(\alpha)); x(k)=2,x'(k)=2+l,\,\bx(\alpha^C\setminus\{k\}),\bx'(\alpha^C\setminus\{k\})\in\mathbb{Z}_+^{d-|\alpha|-1} \right).
\end{align*}
Then, $P^{\alpha\cup\{k\}}_*(\btheta(\alpha\cup\{k\}))$ is given as 
\begin{equation}
P^{\alpha\cup\{k\}}_*(\btheta(\alpha\cup\{k\})) = e^{-\theta(k)} P^{\alpha,\{k\}}_{*,-1}(\btheta(\alpha))+P^{\alpha,\{k\}}_{*,0}(\btheta(\alpha))+ e^{\theta(k)} P^{\alpha,\{k\}}_{*,1}(\btheta(\alpha)). 
\label{eq:Palphak_def}
\end{equation}
Hence, we see that $P^{\alpha\cup\{k\}}_*(\btheta(\alpha\cup\{k\}))$ is generated from $P^{\alpha}_*(\btheta(\alpha))$ through (\ref{eq:Palphak_def}). 
Define  a truncation of $P^{\alpha}_*(\btheta(\alpha))$, $Q^{\alpha,\{k\}}_*(\btheta(\alpha))$, as 
\begin{equation} \label{eq:Qalphak_blockform}
Q^{\alpha,\{k\}}_*(\btheta(\alpha)) = 
\begin{pmatrix}
P^{\alpha,\{k\}}_{*,0}(\btheta(\alpha)) & P^{\alpha,\{k\}}_{*,1}(\btheta(\alpha)) & & \cr
P^{\alpha,\{k\}}_{*,-1}(\btheta(\alpha)) & P^{\alpha,\{k\}}_{*,0}(\btheta(\alpha)) & P^{\alpha,\{k\}}_{*,1}(\btheta(\alpha)) & \cr
& \ddots & \ddots & \ddots 
\end{pmatrix}.  
\end{equation}
We assume the following condition.
\begin{assumption} \label{as:Qalphak_irreducible}
For every $\alpha\in\scrP(D)$ such that $1\le|\alpha|\le d-1$ and for every $k\in D\setminus\alpha$, $Q^{\alpha,\{k\}}_*(\bzero_{|\alpha|})$ is irreducible. 
\end{assumption}

In the following subsection, we will consider a more tractable condition instead of this assumption. Under this assumption, we obtain upper bounds for $\bar{\theta}^{\{k\}}$, as follows.
\begin{proposition} \label{pr:Rk_cp}
For $k\in D$ and for $\alpha\in\scrP(D\setminus\{k\})$, 
\begin{equation}
\log \cp(R^{\{k\}}) 
= \bar{\theta}^{\{k\}} 
\le \max\!\left\{ \theta(k); \btheta\in\bar{\Gamma}^{\{k\}\cup\alpha} \right\}.
\label{eq:cpRk_upper}
\end{equation}
\end{proposition}
\begin{proof}
Let $\alpha\in\scrP(D\setminus\{k\})$. Without loss of generality, we assume $\alpha=\{k_1,k_2,...,k_{n_0}\}$, where $n_0=|\alpha|$. Let $\alpha_0=\emptyset$ and, for $1\le n \le n_0$, let $\alpha_n=\{k_1,k_2,...,k_n\}$. We prove (\ref{eq:cpRk_upper}) by induction. 
First, we have
\[
\bar{\theta}^{\{k\}} 
= \max\!\left\{ \theta(k)\in\mathbb{R}; \cp(P^{\{k\}}_*(\theta(k)))^{-1}\le 1 \right\}
= \max\!\left\{ \theta(k)\in\mathbb{R}; \btheta\in\bar{\Gamma}^{\{k\}} \right\}. 
\]
Next, for $n\le n_0-1$, assume that
\begin{align*}
\bar{\theta}^{\{k\}} 
&\le \max\!\left\{ \theta(k)\in\mathbb{R}; \inf_{\btheta(\alpha_n)\in\mathbb{R}^n} \cp(P^{\{k\}\cup\alpha_n}_*(\btheta(\{k\}\cup\alpha_n)))^{-1}\le 1 \right\} \cr
&= \max\!\left\{ \theta(k)\in\mathbb{R}; \btheta\in\bar{\Gamma}^{\{k\}\cup\alpha_n} \right\}. 
\end{align*}
Under Assumption \ref{as:Qalphak_irreducible}, $Q^{\{k\}\cup\alpha_n,\{k_{n+1}\}}_*(\btheta(\{k\}\cup\alpha_n))$ is irreducible and, by Lemma \ref{le:Q_cp}, we obtain 
\begin{align*}
\cp(P^{\{k\}\cup\alpha_n}_*(\btheta(\{k\}\cup\alpha_n)))
&\le \cp(Q^{\{k\}\cup\alpha_n,\{k_{n+1}\}}_*(\btheta(\{k\}\cup\alpha_n))) \cr
&= \sup_{\btheta(k_{n+1})\in\mathbb{R}^{n+1}} \cp(P^{\{k\}\cup\alpha_{n+1}}_*(\btheta(\{k\}\cup\alpha_{n+1}))), 
\end{align*}
where we use the fact that $Q^{\{k\}\cup\alpha_n,\{k_{n+1}\}}_*(\btheta(\{k\}\cup\alpha_n))$ is a truncation of $P^{\{k\}\cup\alpha_n}_*(\btheta(\{k\}\cup\alpha_n))$. This leads us to 
\begin{align*}
\bar{\theta}^{\{k\}} 
&\le \max\!\left\{ \theta(k)\in\mathbb{R}; \inf_{\btheta(\alpha_{n+1})\in\mathbb{R}^{n+1}} \cp(P^{\{k\}\cup\alpha_{n+1}}_*(\btheta(\{k\}\cup\alpha_{n+1})))^{-1}\le 1 \right\} \cr
&= \max\!\left\{ \theta(k)\in\mathbb{R}; \btheta\in\bar{\Gamma}^{\{k\}\cup\alpha_{n+1}} \right\}, 
\end{align*}
and this completes the proof. 
\end{proof}

By inequality (\ref{eq:nu_limit1}) and Proposition \ref{pr:Rk_cp}, we obtain the following result. 
\begin{theorem} \label{th:nu_limit}
For $k\in D$ and $\alpha\in\scrP(D\setminus\{k\})$ and for $\bx(\{k\}^C)\in\mathbb{Z}_+^{d-1}$ and $j\in S_0$, 
\begin{equation}
\liminf_{n\to\infty} \frac{1}{n} \log \nu_{((x(k)=n,\bx(\{k\}^C)),j)} \ge - \max\!\left\{ \theta(k); \btheta\in\bar{\Gamma}^{\{k\}\cup\alpha} \right\}. 
\label{eq:nu_limit2}
\end{equation}
\end{theorem}

\begin{remark}
From the proof of Proposition \ref{pr:Rk_cp}, we can see that, for $k\in D$ and for $\alpha, \beta\in\scrP(D\setminus\{k\})$, if $\alpha\subset\beta$,  
\begin{equation}
\max\!\left\{ \theta(k)\in\mathbb{R}; \btheta\in\bar{\Gamma}^{\{k\}\cup\alpha} \right\}
\le \max\!\left\{ \theta(k)\in\mathbb{R}; \btheta\in\bar{\Gamma}^{\{k\}\cup\beta} \right\}. 
\end{equation}
Hence, a smaller index set $\alpha$ gives a tighter bound. However, the value of $d-|\alpha|-1$ is the dimension of the Markov chain governed by $P^{\{k\}\cup\alpha}_*(\bzero_{|\alpha|})$, and it becomes harder to estimate the convergence parameter of $P^{\{k\}\cup\alpha}_*(\btheta(\{k\}\cup\alpha))$ as the cardinality of $\alpha$ becomes smaller. 
When $\{k\}\cup\alpha=D$, the dimension of the matrix $P^{D}_*(\btheta)$ is finite and its convergence parameter, which is the reciprocal of the Perron-Frobenius eigenvalue of $P^{D}_*(\btheta)$, can easily be estimated. 
\end{remark}

%%%%%%%%%%%%%%%%%%%%%%%%%%%%%%%%%%%%%%%%%
%
%
\subsection{Markov additive processes induced by the QBD process}

In Subsection \ref{sec:MGsolution}, we considered one-dimensional MA-processes, $\{\tilde{\bY}^{\{k\}}_n\}$, $k\in D$, induced by the original $d$-dimensional QBD process $\{\bY_n\}=\{(\bX_n,J_n)\}$. In this subsection, we consider such MA-processes comprehensively and give an interpretation to the lower bounds obtained in the previous subsection. 

For a nonempty set $\alpha\in\scrP(D)$, let $\{\tilde{\bY}^{\alpha}_n\}=\{(\tilde{\bX}^{\alpha}_n(\alpha),(\tilde{\bX}^{\alpha}_n(\alpha^C),\tilde{J}^{\alpha}_n))\}$ be a $|\alpha|$-dimensional MA-process on the state space $\tilde{\mathbb{S}}^{\alpha}=\mathbb{Z}^{|\alpha|}\times (\mathbb{Z}_+^{d-|\alpha|}\times S_0)$, generated from the $d$-dimensional QBD process $\{\bY_n\}=\{(\bX_n,J_n)\}$ by removing the reflecting boundaries on $\mathbb{B}^\beta$ for $\beta\in\scrP(\alpha)$ and extending the $d$-dimensional QBD process over the negative region with respect to $\bX_n(\alpha)$. 
The behavior  of the MA-process $\{\tilde{\bY}^{\alpha}_n\}$  is stochastically identical to $\{\bY_n\}=\{(\bX_n,J_n)\}$ when $\bX_n(\alpha)>1$.  In terminology of Markov additive process, the level state $\tilde{\bX}^{\alpha}_n(\alpha)$ is the additive part and the phase state $(\tilde{\bX}^{\alpha}_n(\alpha^C),\tilde{J}^{\alpha}_n)$ is the background state. The background process $\{(\tilde{\bX}^{\alpha}_n(\alpha^C),\tilde{J}^{\alpha}_n)\}$ is a Markov chain itself. 
The MA-kernel of the MA-process is given by 
\begin{equation}
\left\{ \tilde{A}^{\alpha}_{\bl(\alpha)},\ \bl(\alpha)\in\{-1,0,1\}^{|\alpha|} \right\}, 
\end{equation}
where 
\[
\tilde{A}^{\alpha}_{\bl(\alpha)} = 
\left( P_{\bx,\bx'}; \bx(\alpha)=2,\,\bx'(\alpha)= \bx(\alpha)+\bl(\alpha),\ \bx(\alpha^C),\bx'(\alpha^C)\in\mathbb{Z}_+^{d-|\alpha|} \right). 
\]
From (\ref{eq:Palpha_blockform}) and (\ref{eq:Palphak_def}), the matrix function $P^{\alpha}_*(\btheta(\alpha))$, which is called a Feynman-Kac operator, is given as 
\begin{equation}
P^{\alpha}_*(\btheta(\alpha)) 
= \sum_{\bl(\alpha)\in\{-1,0,1\}^{|\alpha|}}\ e^{\langle \bl(\alpha),\btheta(\alpha) \rangle} \tilde{A}^{\alpha}_{\bl(\alpha)}. 
\end{equation}
Hence, we see that the lower bounds for the directional asymptotic decay rates of the stationary distribution $\bnu$ are given in terms of the convergence parameter of the Feynman-Kac operators for the MA-processes induced by the original $d$-dimensional QBD process.

%%%%%%%%%%%%%%%
%
Next, we demonstrate that the lower bounds in Theorem \ref{th:nu_limit} coincide with the directional asymptotic decay rates of the occupancy measure for the MA-processes. 
For a nonempty set $\alpha\in\scrP(D)$, we denote by $\tilde{P}^\alpha$ the transition probability matrix of the MA-process $\{\tilde{\bY}^\alpha_n\}$. The matrix $\tilde{P}^\alpha$ can be represented in block form as 
\[
\tilde{P}^\alpha= \left( \tilde{P}^\alpha_{\bx(\alpha),\bx'(\alpha)}; \bx(\alpha),\bx'(\alpha)\in\mathbb{Z}^{|\alpha|} \right),
\]
where
\begin{equation}
\tilde{P}^\alpha_{\bx(\alpha),\bx'(\alpha)}
= \left\{ \begin{array}{ll}
\tilde{A}^\alpha_{\bx'(\alpha)-\bx(\alpha)}, & \mbox{if $\bx'(\alpha)-\bx(\alpha)\in\{-1,0,1\}^{|\alpha|}$}, \cr
O, & \mbox{otherwise}.
\end{array} \right.
\label{eq:Palpha}
\end{equation}
From this, we can see that $\tilde{P}^\alpha$ has a multiple block tri-diagonal structure without boundaries. 
Let $\tau$ be a stopping time defined as
 \[
 \tau = \inf\!\left\{n\ge 1; \tilde{\bX}^{\alpha}_n(\alpha)\in\mathbb{Z}^{|\alpha|}\setminus(\mathbb{Z}_+\setminus\{0\})^{|\alpha|} \right\}, 
 \]
where $\tilde{\bX}^{\alpha}_n(\alpha)$ is the additive part of the MA-process, and $\{\tilde{H}^\alpha_{\bx(\alpha)},\bx(\alpha)\in\mathbb{Z}_+^{|\alpha|} \}$ the occupancy measure of the MA-process defined as, for $(\bx'(\alpha^C),j')$, $(\bx(\alpha^C),j)\in\mathbb{Z}_+^{d-|\alpha|}\times S_0$, 
\[
[\tilde{H}^\alpha_{\bx(\alpha)}]_{(\bx'(\alpha^C),j'),(\bx(\alpha^C),j)} 
= \mathbb{E}\!\left(\sum_{n=0}^{\tau-1} 1(\tilde{\bY}^\alpha_n=(\bx(\alpha),(\bx(\alpha^C),j)) \,\Big|\, \tilde{\bY}^\alpha_0=(\bzero_{|\alpha|},(\bx'(\alpha^C),j'))    \right). 
\]
Let $\tilde{Q}^\alpha$ be a truncation of $\tilde{P}^\alpha$ defined as 
\[
\tilde{Q}^\alpha= \left( \tilde{P}^\alpha_{\bx(\alpha),\bx'(\alpha)}; \bx(\alpha),\bx'(\alpha)\in\mathbb{Z}_+^{|\alpha|} \right).
\]
We assume the following condition throughout this section. 
\begin{assumption} \label{as:tildeQ_irreducible}
For every nonempty set $\alpha\in\scrP(D)$, the substochastic matrix $\tilde{Q}^\alpha$ is irreducible. 
\end{assumption}

Under this condition, Assumption \ref{as:Qalphak_irreducible} automatically holds. Hence, hereafter, we use Assumption \ref{as:tildeQ_irreducible} instead of Assumption \ref{as:Qalphak_irreducible}. 
The following lemma asserts that the lower bounds in Theorem \ref{th:nu_limit} coincide with the directional asymptotic decay rates of the occupancy measure. 
\begin{lemma} \label{le:Hx_asymp}
For a nonempty set $\alpha\in\scrP(D)$ and $k\in\alpha$ and for $\bx(\alpha\setminus\{k\})\in\mathbb{Z}_+^{|\alpha|-1}$ and $(\bx(\alpha^C),j),\,(\bx'(\alpha^C),j')\in\mathbb{Z}_+^{d-|\alpha|}\times S_0$, 
\begin{equation}
\lim_{n\to\infty} \frac{1}{n} \log [\tilde{H}^\alpha_{(x(k)=n,\bx(\alpha\setminus\{k\}))}]_{(\bx'(\alpha^C),j'),(\bx(\alpha^C),j)} 
= - \max\!\left\{\theta(k); \btheta\in\bar{\Gamma}^{\alpha} \right\}. 
\label{eq:Hx_limit1}
\end{equation}
\end{lemma}
\begin{proof}
For a nonempty set $\alpha\in\scrP(D)$, consider the MA-process $\{\tilde{\bY}^\alpha_n\}=\{(\tilde{\bX}^\alpha_n(\alpha),(\tilde{\bX}^\alpha_n(\alpha^C),\tilde{J}^\alpha_n))\}$. Without loss of generality, we assume $\alpha=\{k_1,k_2,...,k_{n_0}\}$, where $n_0=|\alpha|$. 
Regarding $\tilde{X}^\alpha_n(k_1)$ as a level state and $((\tilde{\bX}^\alpha(\alpha\setminus\{k_1\}),\tilde{\bX}^\alpha(\alpha^C)),\tilde{J}^\alpha_n)$ as a phase state, we obtain the following block tri-diagonal representation of $\tilde{Q}^\alpha$:
\[
\tilde{Q}^\alpha
= \begin{pmatrix}
\tilde{Q}^{\alpha,\{k_1\}}_0 & \tilde{Q}^{\alpha,\{k_1\}}_1 & & \cr
\tilde{Q}^{\alpha,\{k_1\}}_{-1} & \tilde{Q}^{\alpha,\{k_1\}}_0 & \tilde{Q}^{\alpha,\{k_1\}}_1 & & \cr
& \ddots & \ddots & \ddots 
\end{pmatrix},
\]
where, for $l\in\{-1,0,1\}$, 
\[
\tilde{Q}^{\alpha,\{k_1\}}_l = \left( \tilde{P}^\alpha_{\bx(\alpha),\bx'(\alpha)}; x(k_1)=0,\,x'(k_1)=l,\,\bx(\alpha\setminus\{k_1\}),\bx'(\alpha\setminus\{k_1\})\in\mathbb{Z}_+^{|\alpha|-1} \right) 
\] 
and we use the fact that the level process $\{\tilde{X}^\alpha_n(k_1)\}$ is skip free. 
Let $\tilde{R}^{\alpha,\{k_1\}}$ be the rate matrix generated from the triplet $\{ \tilde{Q}^{\alpha,\{k_1\}}_{-1},\tilde{Q}^{\alpha,\{k_1\}}_0,\tilde{Q}^{\alpha,\{k_1\}}_1 \}$ and $\tilde{Q}^{\alpha,\{k_1\}}_*(\theta(k_1))$ be the matrix function defined as 
\[
\tilde{Q}^{\alpha,\{k_1\}}_*(\theta(k_1))
= e^{-\theta(k_1)} \tilde{Q}^{\alpha,\{k_1\}}_{-1}+\tilde{Q}^{\alpha,\{k_1\}}_0+ e^{\theta(k_1)} \tilde{Q}^{\alpha,\{k_1\}}_1. 
\]
By Lemma \ref{le:RandG_cp}, we have
\begin{equation}
\log\cp(\tilde{R}^{\alpha,\{k_1\}}) = \bar{\theta}^{\alpha,\{k_1\}} = \max\left\{ \theta(k_1)\in\mathbb{R}; \cp(\tilde{Q}^{\alpha,\{k_1\}}_*(\theta(k_1)))^{-1}\le 1 \right\}.
\label{eq:bartheta_alphak1}
\end{equation}
Furthermore, we have, for $\bx(\alpha\setminus\{k_1\})\in\mathbb{Z}_+^{|\alpha|-1}$ and for $(\bx(\alpha^C),j),\,(\bx'(\alpha^C),j')\in\mathbb{Z}_+^{d-|\alpha|}\times S_0$, 
\[
[\tilde{H}^\alpha_{(x(k_1)=n,\bx(\alpha\setminus\{k_1\}))}]_{(\bx'(\alpha^C),j'),(\bx(\alpha^C),j)} 
= [(\tilde{R}^{\alpha,\{k_1\}})^n]_{((\bx'(\alpha\setminus\{k_1\})=0,\bx'(\alpha^C)),j'),((\bx(\alpha\setminus\{k_1\}),\bx(\alpha^C)),j)}.
\]
Hence, by Corollary \ref{co:cpRlimit}, if the $(\bx'(\alpha^C),j')$-row of $\tilde{Q}_1^{\alpha,\{k_1\}}$ contains at least one positive element, we have
\begin{equation}
\lim_{n\to\infty} \frac{1}{n} \log [\tilde{H}^\alpha_{(x(k_1)=n,\bx(\alpha\setminus\{k_1\}))}]_{(\bx'(\alpha^C),j'),(\bx(\alpha^C),j)} 
= - \bar{\theta}^{\alpha,\{k_1\}}. 
\label{eq:Halpha_limit}
\end{equation}
Furthermore, regarding $\tilde{X}^\alpha_n(k_2)$ as a level state, the matrix function $\tilde{Q}^{\alpha,\{k_1\}}_*(\theta(k_1))$ is also represented in block tri-diagonal form as
\[
\tilde{Q}^\alpha_*(\theta(k_1))
= \begin{pmatrix}
\tilde{Q}^{\alpha,\{k_1,k_2\}}_{*,0}(\theta(k_1)) & \tilde{Q}^{\alpha,\{k_1,k_2\}}_{*,1}(\theta(k_1)) & & \cr
\tilde{Q}^{\alpha,\{k_1,k_2\}}_{*,-1}(\theta(k_1)) & \tilde{Q}^{\alpha,\{k_1,k_2\}}_{*,0}(\theta(k_1)) & \tilde{Q}^{\alpha,\{k_1,k_2\}}_{*,0}(\theta(k_1)) & & \cr
& \ddots & \ddots & \ddots 
\end{pmatrix},
\]
where, for $l\in\{-1,0,1\}$, 
\begin{align*}
\tilde{Q}^{\alpha,\{k_1,k_2\}}_{*,l}(\theta(k_1)) 
&= \biggl( \sum_{m(k_1)\in\{-1,0,1\}} e^{m(k_1) \theta(k_1)} \tilde{P}^\alpha_{(x(k_1)=0,\bx(\alpha\setminus\{k_1\})),(x'(k_1)=m(k_1),\bx'(\alpha\setminus\{k_1\}))}; \cr
&\qquad\qquad x(k_2)=0,\,x'(k_2)=l,\,\bx(\alpha\setminus\{k_1,k_2\}),\bx'(\alpha\setminus\{k_1,k_2\})\in\mathbb{Z}_+^{|\alpha|-2} \biggr). 
\end{align*}
Define a matrix function $\tilde{Q}^{\alpha,\{k_1,k_2\}}_*(\btheta(k_1,k_2))$ as 
\begin{align*}
&\tilde{Q}^{\alpha,\{k_1,k_2\}}_*(\btheta(k_1,k_2)) \cr
&\quad= e^{-\theta(k_2)} \tilde{Q}^{\alpha,\{k_1,k_2\}}_{*,-1}(\theta(k_1)) + \tilde{Q}^{\alpha,\{k_1,k_2\}}_{*,0}(\theta(k_1)) + e^{\theta(k_2)} \tilde{Q}^{\alpha,\{k_1,k_2\}}_{*,0}(\theta(k_1)) \cr
&\quad= \biggl( \sum_{\bm(k_1,k_2)\in\{-1,0,1\}^2} e^{\langle \bm(k_1,k_2), \btheta(k_1,k_2) \rangle} \tilde{P}^\alpha_{(\bx(k_1,k_2)=0,\bx(\alpha\setminus\{k_1,k_2\})),(\bx'(k_1,k_2)=\bm(k_1,k_2),\bx'(\alpha\setminus\{k_1,k_2\}))}; \cr
&\qquad\qquad\qquad \bx(\alpha\setminus\{k_1,k_2\}),\bx'(\alpha\setminus\{k_1,k_2\})\in\mathbb{Z}_+^{|\alpha|-2} \biggr). 
\end{align*}
Then, by Lemma \ref{le:Q_cp}, we obtain 
\[
\cp(\tilde{Q}^{\alpha,\{k_1\}}_*(\theta(k_1)))
= \sup_{\theta(k_2)\in\mathbb{R}} \cp(\tilde{Q}^{\alpha,\{k_1,k_2\}}_*(\btheta(k_1,k_2)))
\]
and this and (\ref{eq:bartheta_alphak1}) lead us to
\begin{equation}
\bar{\theta}^{\alpha,\{k_1\}}
= \max\left\{ \theta(k_1)\in\mathbb{R}; \inf_{\theta(k_2)\in\mathbb{R}} \cp(\tilde{Q}^{\alpha,\{k_1,k_2\}}_*(\btheta(k_1,k_2)))^{-1}\le 1 \right\}.
\label{eq:bartheta_alphak2}
\end{equation}
Repeating this procedure $n_0-2$ more times, we obtain 
\begin{equation}
\bar{\theta}^{\alpha,\{k_1\}}
= \max\left\{ \theta(k_1)\in\mathbb{R}; \inf_{\btheta(\alpha\setminus\{k_1\})\in\mathbb{R}^{n_0-1}} \cp(\tilde{Q}^{\alpha,\alpha}_*(\btheta(\alpha)))^{-1}\le 1 \right\},
\label{eq:bartheta_alphakn0}
\end{equation}
where $\alpha=\{k_1,k_2,...,k_{n_0}\}$ and 
\begin{align*}
\tilde{Q}^{\alpha,\alpha}_*(\btheta(\alpha)) 
&= \sum_{\bm(\alpha)\in\{-1,0,1\}^{n_0}} e^{\langle \bm(\alpha),\btheta(\alpha) \rangle} \tilde{P}^\alpha_{\bzero_{n_0},\bm(\alpha)} 
%= \sum_{\bm(\alpha)\in\{-1,0,1\}^{n_0}} e^{\langle \bm(\alpha),\btheta(\alpha) \rangle} \tilde{A}^\alpha_{\bm(\alpha)} 
=P^\alpha_*(\btheta(\alpha)). 
\end{align*}
This completes the proof. 
\end{proof}

%%%%%%%%%%%%%%%%%%%%%%%%%%%%%%%%%%%%%%%
%
%  Section 4
%
%%%%%%%%%%%%%%%%%%%%%%%%%%%%%%%%%%%%%%%
\section{Concluding remark} \label{sec:conclusion}

It is a natural thought that the asymptotic decay rates of the stationary distribution for a $d$-dimensional QBD process are determined by the MA-processes induced by it. The following is a candidate. 
\begin{conjecture} \label{co:nu_limit}
For $k\in D$, $\bx(\{k\}^C)\in\mathbb{Z}_+^{d-1}$ and $j\in S_0$, 
\begin{equation}
\liminf_{n\to\infty} \frac{1}{n} \log \nu_{((x(k)=n,\bx(\{k\}^C)),j)} = - \sup\!\bigg\{ s(k); \bs\in \bigcap_{\alpha\in\scrP(D),\alpha\ne\emptyset} \bar{\Gamma}^{\alpha} \bigg\}. 
\label{eq:nu_limit3}
\end{equation}
\end{conjecture}
This conjecture holds true when $d=2$, see Proposition 3.1 and Lemma 3.1 of \cite{Ozawa18}.

%%%%%%%%%%%%%%%%%%%%%%%%%%%%%%%%%%%%%%%
%
%  References
%
%%%%%%%%%%%%%%%%%%%%%%%%%%%%%%%%%%%%%%%

%%%%%%%%%%%%%%%%%%%%%%%%%%%%%%%%%%%%%%%
%
%  Appendix
%
%%%%%%%%%%%%%%%%%%%%%%%%%%%%%%%%%%%%%%%
\appendix
%%%%%%%%%%%%%%%%%%%%%%%%%%%%%%%%%%%%
% Convexity of the reciprocal of a convergence parameter
%
\section{Convexity of the reciprocal of a convergence parameter} \label{sec:cp_convex}

Let $n$ be a positive integer and $\bx=(x_1,x_2,...,x_n)\in\mathbb{R}^n$. 
We say that a positive function $f(\bx)$ is log-convex in $\bx$ if $\log f(\bx)$ is convex in $\bx$, and denote by $\mathfrak{S}_n$ the class of all log-convex functions of $n$ variables, together with the function identically zero. Note that, $\mathfrak{S}_n$ is closed under addition, multiplication, raising to any positive power, and ``$\limsup$" operation. Furthermore, a log-convex function is a convex function. 

Let $F(\bx)=(f_{ij}(\bx),i,j\in\mathbb{Z}_+)$ be a matrix function each of whose elements belongs to the class $\mathfrak{S}_n$, i.e., for every $i,j\in\mathbb{Z}_+$, $f_{i,j}\in\mathfrak{S}_n$. In \cite{Kingman61}, it is demonstrated that when $n=1$ and $F(x)$ is a square matrix of a finite dimension, the maximum eigenvalue of $F(x)$ is a log-convex function in $x$. Analogously, we obtain the following lemma. 
\begin{lemma} \label{le:cp_convex}
For every $\bx\in\mathbb{R}^n$, assume all iterates of $F(\bx)$ is finite and $F(\bx)$ is irreducible. Then, the reciprocal of the convergence parameter of $F(\bx)$, $\cp(F(\bx))^{-1}$, is log-convex in $\bx$ or identically zero.
\end{lemma}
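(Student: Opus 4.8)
The plan is to extend Kingman's finite-dimensional argument by expressing $\cp(F(\bx))^{-1}$ as a Cauchy--Hadamard limit and then propagating superconvexity through that limit using the closure properties of $\mathfrak{S}_n$. First I would record the analytic description of the convergence parameter. Since $F(\bx)$ is irreducible and, by hypothesis, all its iterates are finite, Seneta's theory of irreducible nonnegative matrices (Theorem 6.1 of \cite{Seneta06}) guarantees that for every fixed pair $(i,j)$ the power series $\sum_{m\ge 0} z^m [F(\bx)^m]_{i,j}$ has a common radius of convergence equal to $\cp(F(\bx))$. By the Cauchy--Hadamard formula this yields, for any fixed $i,j$,
\[
\cp(F(\bx))^{-1} = \limsup_{m\to\infty} \bigl([F(\bx)^m]_{i,j}\bigr)^{1/m}.
\]

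Next I would track superconvexity through this formula. Fixing $i,j$, the relevant entry of the $m$-th power is
\[
[F(\bx)^m]_{i,j} = \sum_{k_1,\dots,k_{m-1}} f_{i,k_1}(\bx)\, f_{k_1,k_2}(\bx) \cdots f_{k_{m-1},j}(\bx),
\]
a (possibly countable) sum of products of entries, each of which lies in $\mathfrak{S}_n$. Each finite product of superconvex functions is superconvex by closure under multiplication, and a countable sum of nonnegative superconvex functions is a nondecreasing limit of its finite partial sums, hence a $\limsup$ of members of $\mathfrak{S}_n$; because the iterate-finiteness hypothesis makes this sum finite, it again lies in $\mathfrak{S}_n$. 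Thus $[F(\bx)^m]_{i,j}\in\mathfrak{S}_n$ for every $m$. Raising to the positive power $1/m$ preserves membership, and a final application of the $\limsup$-closure gives $\cp(F(\bx))^{-1}\in\mathfrak{S}_n$, which is exactly the assertion that it is superconvex or identically zero.

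To complete the argument I would clarify the dichotomy. A nonzero member of $\mathfrak{S}_n$ is everywhere positive, since its logarithm is a finite convex function; hence each entry $f_{a,b}$ is either identically zero or everywhere positive, so the zero pattern of $F(\bx)$---and therefore its irreducibility---does not depend on $\bx$. Choosing $(i,j)$ so that some power has an everywhere-positive $(i,j)$-entry (possible by irreducibility) then forces $\cp(F(\bx))^{-1}$ to be everywhere positive, and the ``identically zero'' alternative survives only in degenerate cases. The main obstacle I anticipate is the careful justification that the countably infinite sum defining $[F(\bx)^m]_{i,j}$ stays inside $\mathfrak{S}_n$: this is precisely where finiteness of all iterates must be invoked, since without it the limit-of-partial-sums argument could produce an infinite, and hence non-superconvex, value.
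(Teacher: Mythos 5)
Your proof is correct and follows essentially the same route as the paper's: both establish that each entry $[F(\bx)^m]_{i,j}$ lies in $\mathfrak{S}_n$ via closure of that class under multiplication, monotone countable summation (kept finite by the iterate-finiteness hypothesis), and $\limsup$, and then combine Seneta's Theorem 6.1 (common radius of convergence over all entries, by irreducibility) with the Cauchy--Hadamard formula $\cp(F(\bx))^{-1}=\limsup_{m\to\infty}\bigl([F(\bx)^m]_{i,j}\bigr)^{1/m}$ and the closure of $\mathfrak{S}_n$ under positive powers and $\limsup$. The only cosmetic differences are that the paper builds the entries of $F(\bx)^m$ by induction on $m$ through $F(\bx)^{m+1}=F(\bx)^m F(\bx)$ rather than your direct path expansion, and your closing paragraph on the zero/positive dichotomy is a harmless addition not needed for the stated conclusion, since $\mathfrak{S}_n$ is defined to include the identically zero function.
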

\begin{proof}
For $k\ge 0$, we denote by $f^{(k)}_{i,j}(\bx)$ the $(i,j)$-element of $F(\bx)^k$. 
First, we show that, for any $k\ge 1$ and for any $i,j\in\mathbb{Z}_+$, $f^{(k)}_{i,j}(\bx)\in\mathfrak{S}_n$. It is obvious when $k=1$. Suppose that it holds for $k$. Then, we have, for any $i,j\in\mathbb{Z}_+$,  
\begin{equation}
f^{(k+1)}_{i,j}(\bx) = \lim_{m\to\infty} \sum_{l=0}^m f^{(k)}_{i,l}(\bx)\, f_{l,j}(\bx), 
\end{equation}
and this leads us to $f^{(k+1)}_{i,j}(\bx)\in\mathfrak{S}_n$ since $\mathfrak{S}_n$ is closed under addition, multiplication and ``$\limsup$" (``$\lim$") operation. Therefore, for any $k\ge 1$, every element of $F(\bx)^n$ belongs to $\mathfrak{S}_n$. 

Next, we note that, by Theorem 6.1 of \cite{Seneta06}, since $F(\bx)$ is irreducible, all elements of the power series $\sum_{k=0}^\infty z^k F(\bx)^k$ have the common convergence radius (convergence parameter), which is denoted by $\cp(F(\bx))$. 
By Cauchy-Hadamard theorem, we have, for any $i,j\in\mathbb{Z}_+$, 
\begin{equation}
\cp(F(\bx))^{-1} = \limsup_{k\to\infty} \bigl(f^{(k)}_{i,j}(\bx)\bigr)^{1/k}, 
\end{equation}
and this implies $\cp(F(\bx))^{-1}\in\mathfrak{S}_n$ since $\bigl(f^{(k)}_{i,j}(\bx)\bigr)^{1/k}\in\mathfrak{S}_n$ for any $k\ge 1$. 
\end{proof}

%%%%%%%%%%%%%%%%%%%%%%%%%%%%%%%%%%%%%%%
%
\section{Proof of Lemma \ref{le:RandGmatrix_equations}} \label{sec:proof_RandGmatrix_equations}

\begin{proof}
{\it (i)}\quad 
For $n\ge 1$, $\scrI_{D,1,n}$ and $\scrI_{U,1,n}$ satisfy 
\begin{align*}
\scrI_{D,1,n} &= \biggl\{\bi_{(n)}\in\{-1,0,1\}^n:\ \sum_{l=1}^k i_l\ge 0\ \mbox{for $k\in\{1,2,...,n-2\}$},\ \sum_{l=1}^{n-1} i_l=0\ \mbox{and}\ i_n=-1 \biggr\} \cr
&= \{(\bi_{(n-1)},-1):\ \bi_{(n-1)}\in\scrI_{n-1} \}, \\
\scrI_{U,1,n} &= \biggl\{\bi_{(n)}\in\{-1,0,1\}^n:\ i_1=1,\,\sum_{l=2}^k i_l\ge 0\ \mbox{for $k\in\{2,...,n-1\}$}\ \mbox{and} \sum_{l=2}^n i_l=0 \biggr\} \cr 
&= \{(1,\bi_{(n-1)}):\ \bi_{(n-1)}\in\scrI_{n-1} \},
\end{align*}
where $\bi_{(n)}=(i_1,i_2,...,i_n)$. Hence, by Fubini's theorem, we have, for $i,j\in\mathbb{Z}_+$, 
\begin{align*}
&[G]_{i,j} = \sum_{n=1}^\infty \sum_{k=0}^\infty [Q_{0,0}^{(n-1)}]_{i,k}\, [ A_{-1}]_{k,j}  = [N  A_{-1}]_{i,j}, \\
&[R]_{i,j} = \sum_{n=1}^\infty \sum_{k=0}^\infty [ A_1]_{i,k} [Q_{0,0}^{(n-1)}]_{k,j} = [ A_1 N]_{i,j}. 
\end{align*}. 

%%%%%%%%%%%%%%%%%%%%%
{\it (ii)}\quad 
We prove equation (\ref{eq:Gmatrix_equation0}). 
In a manner similar to that used in (i), we have, for $n\ge 3$, 
\begin{align*}
\scrI_{D,1,n} 
%&= \biggl\{\bi_{(n)}\in\{-1,0,1\}^n:\ i_1=0,\ \sum_{l=2}^k i_l\ge 0\ \mbox{for $k\in\{2,...,n-1\}$},\ \sum_{l=2}^n i_l=-1 \biggr\} \cr
%&\qquad \bigcup\,\biggl\{\bi_{(n)}\in\{-1,0,1\}^n:\ i_1=1,\ \sum_{l=2}^k i_l\ge -1\ \mbox{for $k\in\{2,...,n-1\}$},\ \sum_{l=2}^n i_l=-2 \biggr\} \cr
&= \{(0,\bi_{(n-1)}):\ \bi_{(n-1)}\in\scrI_{D,1,n-1} \} \cup \{(1,\bi_{(n-1)}):\ \bi_{(n-1)}\in\scrI_{D,2,n-1} \}, \\
%\end{align*}
%
%and $\scrI_{D,2,n}$ satisfies 
%
%\begin{align*}
\scrI_{D,2,n} 
%&= \bigcup_{m=1}^{n-1} \biggl\{\bi_{(n)}\in\{-1,0,1\}^n:\ \sum_{l=1}^k i_l\ge 0\ \mbox{for $k\in\{1,2,...,m-1\}$},\ \sum_{l=1}^{m} i_l=-1,\cr
%&\qquad\qquad \sum_{l=m+1}^k i_l\ge 0\ \mbox{for $k\in\{m+1,m+2,...,n-1\}$}\ \mbox{and} \sum_{l=m+1}^{n} i_l=-1\biggr\} \cr
&= \bigcup_{m=1}^{n-1} \{(\bi_{(m)},\bi_{(n-m)}):\ \bi_{(m)}\in\scrI_{D,1,m}\ \mbox{and}\ \bi_{(n-m)}\in\scrI_{D,1,m-n} \}. 
\end{align*}
Hence, we have, for $n\ge 3$, 
\begin{align*}
D^{(n)} 
&= A_0 D^{(n-1)} + A_1 \sum_{\bi_{(n-1)}\in\scrI_{D,2,n-1}} A_{i_1} A_{i_2} \cdots A_{i_{n-1}} \cr
&= A_0 D^{(n-1)} + A_1 \sum_{m=1}^{n-1} D^{(m)} D^{(n-m-1)}, 
\end{align*} 
and by Fubini's theorem, we obtain, for $i,j\in\mathbb{Z}_+$, 
\begin{align*}
[G]_{i,j} 
&= [ D^{(1)}]_{i,j} + \sum_{n=2}^\infty \sum_{k=0}^\infty [ A_0]_{i,k} [D^{(n-1)}]_{k,j} %\cr
%&\qquad\qquad 
+ \sum_{n=3}^\infty \sum_{m=1}^{n-2} \sum_{k=0}^\infty \sum_{l=0}^\infty [ A_1]_{i,k} [^{m} D^{(m)}]_{k,l} [D^{(n-m-1)}]_{l,j} \cr
&= [ A_{-1}]_{i,j} + [ A_0 G]_{i,j} + [ A_1 G^2]_{i,j}, 
\end{align*}
where we use the fact that $D^{(1)}=A_{1}$ and $D^{(2)}=A_0 A_{-1}=A_0 D^{(1)}$. 
Equation (\ref{eq:Rmatrix_equation0}) is analogously proved. 

%%%%%%%%%%%%%%%%%%%%%
{\it (iii)}\quad 
We prove equation (\ref{eq:NandH_relation}). 
In a manner similar to that used in (i), we have, for $n\ge 1$, 
\begin{align*}
\scrI_{n} 
%&= \biggl\{\bi_{(n)}\in\{-1,0,1\}^n:\ i_1=0,\ \sum_{l=2}^k i_l\ge 0\ \mbox{for $k\in\{2,...,n-1\}$},\ \sum_{l=2}^n i_l=0 \biggr\} \cr
%&\qquad \bigcup\,\biggl\{\bi_{(n)}\in\{-1,0,1\}^n:\ i_1=1,\ \sum_{l=2}^k i_l\ge -1\ \mbox{for $k\in\{2,...,n-1\}$},\ \sum_{l=2}^n i_l=-1 \biggr\} \cr
%
%&= \{(0,\bi_{(n-1)}):\ \bi_{(n-1)}\in\scrI_{n-1} \} \cr
%&\qquad \bigcup_{m=2}^n\,\biggl\{\bi_{(n)}\in\{-1,0,1\}^n:\ i_1=1,\ \sum_{l=2}^k i_l\ge 0\ \mbox{for $k\in\{2,...,m-1\}$},\ \sum_{l=2}^m i_l=-1, \cr
%&\qquad\qquad\quad \sum_{l=m+1}^k i_l\ge 0\ \mbox{for $k\in\{m+1,...,n-1\}$},\ \sum_{l=m+1}^n i_l=0 \biggr\} \cr
%
&= \{(0,\bi_{(n-1)}):\ \bi_{(n-1)}\in\scrI_{n-1} \} \cr
&\qquad \cup \left(\cup_{m=2}^n \{(1,\bi_{(m-1)},\bi_{(n-m)}):\ \bi_{(m-1)}\in\scrI_{D,1,m-1}, \bi_{(n-m)}\in\scrI_{n-m} \} \right), \\
\scrI_{n} &= \{(\bi_{(n-1)},0):\ \bi_{(n-1)}\in\scrI_{n-1} \} \cr
&\qquad \cup \left(\cup_{m=0}^{n-2} \{(\bi_{(m)},1,\bi_{(n-m-1)}):\ \bi_{(m)}\in\scrI_{m}, \bi_{(n-m-1)}\in\scrI_{D,1,n-m-1} \} \right). 
\end{align*}
Hence, we have, for $n\ge 1$, 
\begin{align*}
&Q_{0,0}^{(n)} = A_0 Q_{0,0}^{(n-1)} + \sum_{m=2}^n A_1 D^{(m-1)} Q_{0,0}^{(n-m)}, \\
&Q_{0,0}^{(n)} = Q_{0,0}^{(n-1)} A_0 + \sum_{m=0}^{n-2} Q_{0,0}^{(m)} A_1 D^{(m-n-1)} , 
\end{align*}
and by Fubini's theorem, we obtain, for $i,j\in\mathbb{Z}_+$, 
\begin{align*}
[N]_{i,j} 
&= \delta_{ij} 
+ \sum_{n=1}^\infty \sum_{k=0}^\infty [A_0]_{i,k} [Q_{0,0}^{(n-1)}]_{k,j} 
+ \sum_{n=2}^\infty \sum_{m=2}^n \sum_{k=0}^\infty \sum_{l=0}^\infty [A_1]_{i,k} [D^{(m-1)}]_{k,l} [Q_{0,0}^{(n-m)}]_{l,j} \cr
&= \delta_{i,j} + [A_0 N]_{i,j} + [A_1 G N]_{i,j}, \\
[N]_{i,j} 
&= \delta_{ij} 
+ \sum_{n=1}^\infty \sum_{k=0}^\infty [Q_{0,0}^{(n-1)}]_{i,k} [A_0]_{k,j} 
+ \sum_{n=2}^\infty \sum_{m=0}^{n-2} \sum_{k=0}^\infty \sum_{l=0}^\infty [Q_{0,0}^{(m)}]_{i,k} [A_1]_{k,l} [D^{(n-m-1)}]_{l,j}  \cr
&= \delta_{i,j} + [N A_0]_{i,j} + [N A_1 G]_{i,j},
\end{align*}
This leads us to equation (\ref{eq:NandH_relation}). 
\end{proof}

%%%%%%%%%%%%%%%%%%%%%%%%%%%%%%%%%%%%%%%
%
\section{A sufficient condition ensuring $\chi(\theta)$ is unbounded} \label{sec:chi_unbounded}

\begin{proposition} \label{pr:chi_unbounded}
Assume $P$ is irreducible, then $\chi(\theta)$ is unbounded in both the directions, i.e., $\lim_{\theta\to -\infty} \chi(\theta) = \lim_{\theta\to\infty} \chi(\theta)=\infty$. 
\end{proposition}

\begin{proof}
Note that, since $P$ is irreducible, $A_*$ is also irreducible. 
For $n\in\mathbb{N}$, $j\in\mathbb{Z}_+$ and $\theta\in\mathbb{R}$, $A_*(\theta)^n$ satisfies 
\begin{align}
&[ A_*(\theta)^n ]_{jj}
=\ \sum_{\bi_{(n)}\in\{-1,0,1\}^n} [ A_{i_1} A_{i_2} \times \cdots \times A_{i_n} ]_{jj}\,e^{\theta \sum_{k=1}^n i_k},
\label{eq:Asesn}
\end{align}
where $\bi_{(n)}=(i_1,i_2,...,i_n)$.
Since $P$ is irreducible, there exist $n_0>1$ and $\bi_{(n_0)}\in\{-1,0,1\}^{n_0}$ such that $[ A_{i_1} A_{i_2} \times \cdots \times A_{i_{n_0}} ]_{jj}>0$ and $\sum_{k=1}^{n_0} i_k =1$. For such a $n_0$, we have $[A_*(\theta)^{n_0}]_{jj}\ge c e^\theta$ for some $c>0$. 
This implies that, for any $m\in\mathbb{N}$, $[A_*(\theta)^{n_0 m}]_{jj}\ge c^m e^{m \theta}$ and we have 
\[
\chi(\theta)
= \limsup_{m\to\infty} ([A_*(\theta)^m]_{jj})^{\frac{1}{m}} 
\ge \limsup_{m\to\infty} ([A_*(\theta)^{n_0 m}]_{jj})^{\frac{1}{n_0 m}}
\ge c^{\frac{1}{n_0}} e^{\frac{\theta}{n_0}}. 
\]
Therefore, $\lim_{\theta\to\infty} \chi(\theta)=\infty$. 
Analogously, we can obtain that $\chi(\theta)\ge c^{\frac{1}{n_0}} e^{-\frac{\theta}{n_0}}$ for some $n_0\in\mathbb{N}$ and $c>0$, and this implies that $\lim_{\theta\to -\infty} \chi(\theta)=\infty$. 
\end{proof}

%%%%%%%%%%%%%%%%%%%%%%%%%%%%%%%%%%%%%%%
%
\section{Proof of Proposition \ref{pr:R_u} and Corollary \ref{co:cpRlimit}} \label{sec:proof_R_u}

%%%%%%%%%%%%%%%%%%
%
%   proof of proposition 2.2
%
%%%%%%%%%%%%%%%%%%
\begin{proof}[Proof of Proposition \ref{pr:R_u}]
Let $S_1$ be the set of indexes of nonzero rows of $A_1$, i.e., $S_1=\{k\in\mathbb{Z}_+; \mbox{the $k$-th row of $A_1$ is nonzero} \}$, and $S_2=\mathbb{Z}_+\setminus S_1$.  For $i\in\{-1,0,1\}$, reorder the rows and columns of $A_i$ so that it is represented as
\[
A_i =\begin{pmatrix}
A_{i,11} & A_{i,12} \cr
A_{i,21} & A_{i,22}
\end{pmatrix},
\]
where $A_{i,11}=( [A_i]_{k,l}; k,l\in S_1 )$, $A_{i,12}=( [A_i]_{k,l}; k\in S_1, l\in S_2 )$, $A_{i,21}=( [A_i]_{k,l}; k\in S_2, l\in S_1 )$ and $A_{i,22}=( [A_i]_{k,l}; k,l\in S_2 )$. By the definition of $S_1$, every row of $A_{1,11}+A_{1,12}$ is nonzero and $A_{1,21}=A_{1,22}=O$. 
Since $Q$ is irreducible and $R$ is finite, $N$ is also finite and positive. Hence, $R$ is given as
\begin{equation}
R = A_1 N 
= \begin{pmatrix}
R_{11} & R_{12} \cr
O & O
\end{pmatrix},
\label{eq:R11R12}
\end{equation}
where $R_{11}=( [R]_{k,l}; k,l\in S_1 )$ is positive and hence irreducible; $R_{12}=( [R]_{k,l}; k\in S_1, l\in S_2 )$ is also positive. 
Since $R_{11}$ is a submatrix of $R$, we have $\cp(R_{11})\ge \cp(R)$. 

We derive an inequality with respect to $R_{11}$ and $R_{12}$. From (\ref{eq:Rmatrix_equation0}), we obtain $R\ge R^2 A_{-1} + R A_0$ and, from this inequality, 
\begin{align}
&R_{11} \ge R_{11} R_{12} A_{-1,21} + R_{12} A_{0,21} \label{eq:R11_ineq} \\
&R_{12} \ge R_{11} R_{12} A_{-1,22} + R_{12} A_{0,22}. \label{eq:R12_ineq}
\end{align}
For $n\ge 1$ and $\bi_{(n)}=(i_1,i_2,...,i_n)\in\{-1,0\}^n$, define $A_{\bi_{(n)},22}$ and $\Vert \bi_{(n)} \Vert$ as
\[
A_{\bi_{(n)},22} = A_{i_n,22}\times A_{i_{n-1},22}\times \cdots \times A_{i_1,22},\quad 
\Vert \bi_{(n)} \Vert = \sum_{k=1}^n |i_k|.
\] 
Then, by induction using (\ref{eq:R12_ineq}), we obtain, for $n\ge 1$, 
\begin{equation}
R_{12} \ge \sum_{\bi_{(n)}\in\{-1,0\}^n} R_{11}^{\Vert \bi_{(n)} \Vert} R_{12} A_{\bi_{(n)},22}, 
\label{eq:R12_ineq2}
\end{equation}
and this and (\ref{eq:R11_ineq}) lead us to, for $n\ge 1$,  
\begin{equation}
R_{11} \ge \sum_{\bi_{(n)}\in\{-1,0\}^n} R_{11}^{\Vert \bi_{(n)} \Vert} R_{12} A_{\bi_{(n-1)},22} A_{i_n,21}, 
\label{eq:R11_ineq2}
\end{equation}
where $A_{\bi_{(0)},22} = I$. 
We note that since $A_*=A_{-1}+A_0+A_1$ is irreducible and $A_{1,21}=A_{1,22}=O$, for every $k\in S_2$ and every $l\in S_1$, there exist $n_0\ge 1$ and $\bi_{(n_0)}\in\{-1,0\}^{n_0}$ such that $[A_{\bi_{(n_0-1)},22} A_{i_{n_0},21}]_{k,l}>0$. 

Let $\alpha$ be the convergence parameter of $R_{11}$. Since $R_{11}$ is irreducible, $R_{11}$ is either $\alpha$-recurrent or $\alpha$-transient. 
First, we assume $R_{11}$ is $\alpha$-recurrent. Then, there exists a positive vector $\bu_1$ such that $\alpha \bu_1 R_{11}=\bu_1$. If $\bu_1 R_{12}<\infty$, then $\bu=(\bu_1, \alpha \bu_1 R_{12})$ satisfies $\alpha \bu R = \bu$ and we obtain $\cp(R)\ge\alpha=\cp(R_{11})$. Since $\cp(R)\le\cp(R_{11})$, this implies $\alpha=\cp(R)=e^{\bar{\theta}}$ and we obtain statement (i) of the proposition. We, therefore, prove $\bu_1 R_{12}<\infty$. 
Suppose, for some $k\in S_2$, the $k$-th element of $\bu_1 R_{12}$ diverges. For this $k$ and any $l\in S_1$, there exist $n_0\ge 1$ and $\bi_{(n_0)}\in\{-1,0\}^{n_0}$ such that $[A_{\bi_{(n_0-1)},22} A_{i_{n_0},21}]_{k,l}>0$. Hence, from (\ref{eq:R11_ineq2}), we obtain 
\begin{align*}
[\alpha^{-1} \bu_1]_l 
= [\bu_1 R_{11}]_l 
&\ge [\bu_1 R_{11}^{\Vert \bi_{(n_0)} \Vert} R_{12}]_k [A_{\bi_{(n_0-1)},22} A_{i_{n_0},21}]_{k,l}  \cr
&= \alpha^{-\Vert \bi_{(n_0)} \Vert} [\bu_1 R_{12}]_k [A_{\bi_{(n_0-1)},22} A_{i_{(n_0)},21}]_{k,l}.
\end{align*}
This contradicts $\bu_1$ is finite and we see $\bu_1 R_{12}$ is finite. 

Next, we assume $R_{11}$ is $\alpha$-transient, i.e., $\sum_{n=0}^\infty \alpha^n R_{11}^n<\infty$. We have 
\[
\sum_{n=0}^\infty \alpha^n R^n 
=\begin{pmatrix}
\sum_{n=0}^\infty \alpha^n R_{11}^n & \sum_{n=1}^\infty \alpha^n R_{11}^{n-1} R_{12} \cr
O & O
\end{pmatrix}. 
\]
Hence, in order to prove $\sum_{n=0}^\infty \alpha^n R^n<\infty$, it suffices to demonstrate $\sum_{n=1}^\infty \alpha^n R_{11}^{n-1} R_{12}<\infty$. 
Suppose, for some $k\in S_1$ and some $l\in S_2$, the $(k,l)$-element of $\sum_{n=1}^\infty \alpha^n R_{11}^{n-1} R_{12}$ diverges. For this $l$ and any $m\in S_1$, there exist $n_0\ge 1$ and $\bi_{(n_0)}\in\{-1,0\}^{n_0}$ such that $[A_{\bi_{(n_0-1)},22} A_{i_{n_0},21}]_{l,m}>0$. 
For such an $n_0$ and $\bi_{(n_0)}$, we obtain from (\ref{eq:R11_ineq2}) that, for $n\ge 1$,  
\[
R_{11}^n = R_{11}^{n-1} R_{11} \ge R_{11}^{\Vert \bi_{(n_0)} \Vert} R_{11}^{n-1} R_{12} A_{\bi_{(n_0-1)},22} A_{i_{n_0},21}, 
\]
where every diagonal element of $R_{11}^{\Vert \bi_{(n_0)} \Vert}$ is positive. From this inequality, we obtain 
\[
\left[\sum_{n=1}^\infty \alpha^n R_{11}^n\right]_{k,m} 
\ge \left[ R_{11}^{\Vert \bi_{(n_0)} \Vert} \right]_{k,k} \left[ \sum_{n=1}^\infty \alpha^n R_{11}^{n-1} R_{12} \right]_{k,l} \left[ A_{\bi_{(n_0-1)},22} A_{i_{n_0},21} \right]_{l,m}.
\]
This contradicts $R_{11}$ is $\alpha$-transient and we obtain $\sum_{n=0}^\infty \alpha^n R^n<\infty$. Furthermore, this leads us to $\cp(R)\ge \alpha=\cp(R_{11})$ and, from this and $\cp(R)\le\cp(R_{11})$, we see $\alpha=\cp(R)=e^{\bar{\theta}}$. As a result, we obtain statement (ii) of the proposition and this completes the proof. 
\end{proof}

%%%%%%%%%%%%%%%%%%
%
%   proof of corollary 2.1
%
%%%%%%%%%%%%%%%%%%
\begin{proof}[Proof of Corollary \ref{co:cpRlimit}]
In a manner similar to that used in the proof of Proposition \ref{pr:R_u}, letting $S_1$ be the set of indexes of nonzero rows of $A_1$ and $S_2=\mathbb{Z}_+\setminus S_1$, and reordering the rows and columns of $R$  according to $S_1$ and $S_2$, we obtain $R$ given by expression (\ref{eq:R11R12}), where $R_{11}=( [R]_{k,l}; k,l\in S_1 )$ is positive and hence irreducible and $R_{12}=( [R]_{k,l}; k\in S_1, l\in S_2 )$ is also positive. From the proof of Proposition \ref{pr:R_u},  we know that $ \cp(R)=\cp(R_{11})=e^{\bar{\theta}}$. 
By these facts and the Cauchy-Hadamard theorem, we obtain, for $i,j\in S_1$, $k\in S_2$ and $n\ge 1$, 
\begin{align}
&\limsup_{n\to\infty} \left( [R^n]_{i,j} \right)^{\frac{1}{n}}
= \limsup_{n\to\infty} \left( [R_{11}^n]_{i,j} \right)^{\frac{1}{n}}
= e^{-\bar{\theta}}, \label{eq:limsupR11} \\
&\limsup_{n\to\infty} \left( [R^n]_{i,k} \right)^{\frac{1}{n}} \le e^{-\bar{\theta}}.
\end{align}
Since $[R_{11}^n]_{i,i}$ is subadditive with respect to $n$, i.e., $[R_{11}^{n_1+n_2}]_{i,i}\ge [R_{11}^{n_1}]_{i,i}\, [R_{11}^{n_2}]_{i,i}$ for $n_1,n_2\in\mathbb{Z}_+$, the limit sup in equation (\ref{eq:limsupR11}) can be replaced by the limit for $i=j$ (see, e.g., Lemma A.4 of \cite{Seneta06}). 
Furthermore, we have, for $i,j\in S_1$, $k\in S_2$ and $n\ge 1$, 
\begin{align}
&\liminf_{n\to\infty} \left( [R^n]_{i,j} \right)^{\frac{1}{n}} 
= \liminf_{n\to\infty} \left( [R_{11}^{n-1} R_{11}]_{i,j} \right)^{\frac{1}{n}} 
\ge \liminf_{n\to\infty} \left( [R_{11}^{n-1}]_{i,i} [R_{11}]_{i,j} \right)^{\frac{1}{n}} 
= e^{-\bar{\theta}}, \\
&\liminf_{n\to\infty} \left( [R^n]_{i,k} \right)^{\frac{1}{n}} 
= \liminf_{n\to\infty} \left( [R_{11}^{n-1} R_{12}]_{i,k} \right)^{\frac{1}{n}} 
\ge \liminf_{n\to\infty} \left( [R_{11}^{n-1}]_{i,i} [R_{12}]_{i,k} \right)^{\frac{1}{n}} 
= e^{-\bar{\theta}}.
\end{align}
Hence, we obtain equation (\ref{eq:cpRlimit}). It is obvious by expression (\ref{eq:R11R12}) that, for $i\in S_2$ and $j\in\mathbb{Z}_+$, $[R^n]_{i,j}=0$. 
\end{proof}

\end{document}